\newtheorem{theorem}{Theorem}
\newtheorem{lemma}[theorem]{Lemma}
\newtheorem{proposition}[theorem]{Proposition}
\theoremstyle{definition}
\newtheorem{definition}{Definition}
\newtheorem{assumption}{Assumption}
\newtheorem{example}{Example}
\newtheorem{remark}{Remark}
\newcommand{\reals}{\mathbb{R}}
\renewcommand{\d}{\mathrm{d}}
\newcommand{\transpose}[1]{#1^\top}
\newcommand{\T}[1]{\transpose{#1}}
\newcommand{\pseudoinverse}[1]{#1^\dagger}
\newcommand{\pinv}[1]{\pseudoinverse{#1}}
\newcommand{\Q}{\mathbb{Q}}
\renewcommand{\P}{\mathbb{P}}
\newcommand{\empiricalP}[1]{\widehat\P_{#1}}
\newcommand{\diracDelta}[1]{\delta_{#1}}
\newcommand{\setPlans}[2]{\Pi(#1,#2)}
\newcommand{\probSpace}[1]{\mathcal{P}(#1)}
\newcommand{\pushforward}[1]{{#1}_{\#}}
\newcommand{\transportCost}[3]{W^{#1}(#2,#3)}
\newcommand{\ball}[3]{\mathbb{B}_{#1}^{#2}(#3)}
\DeclareMathOperator*{\st}{s.t.}
\newcommand{\DS}{\displaystyle}
\title{\LARGE \bf Wasserstein Tube MPC with Exact Uncertainty Propagation}
\author{Liviu Aolaritei$^*$, Marta Fochesato$^*$, John Lygeros, and Florian Dörfler
\thanks{$^*$: Equal contribution.}
\thanks{The authors are with the Automatic Control Laboratory, Department of Electrical Engineering and Information Technology at ETH Z\"urich, Switzerland {\tt\footnotesize \{aliviu,mfochesato,jlygeros,dorfler\}@ethz.ch}}}
\begin{document}

\maketitle
\thispagestyle{empty}
\pagestyle{empty}

\IEEEpeerreviewmaketitle

\begin{abstract}
We study model predictive control (MPC) problems for stochastic LTI systems, where the noise distribution is unknown, compactly supported, and only observable through a limited number of i.i.d. noise samples. Building upon recent results in the literature, which show that distributional uncertainty can be efficiently captured within a Wasserstein ambiguity set, and that such ambiguity sets propagate exactly through the system dynamics, we start by formulating a novel Wasserstein Tube MPC (WT-MPC) problem, with distributionally robust CVaR constraints. We then show that the WT-MPC problem: (1) is a direct generalization of the (deterministic) Robust Tube MPC (RT-MPC) to the stochastic setting; (2) through a scalar parameter, it interpolates between the data-driven formulation based on sample average approximation and the RT-MPC formulation, allowing us to optimally trade between safety and performance; (3) admits a tractable convex reformulation; and (4) is recursively feasible. We conclude the paper with a numerical comparison of WT-MPC and RT-MPC. 
\end{abstract}

\section{Introduction}
Model Predictive Control (MPC) is a feedback strategy in which a control law is computed online by repeatedly minimizing a predicted performance
index at each time step. For real-world systems, the prediction of the future behaviour is often affected by uncertainty due to inaccurate system models or exogenous noise. This results in a system-model mismatch that can highly deteriorate the system performance and/or safety. In this context, Tube MPC \cite{stochasticMPC} offers a successful framework to control systems affected by uncertainty.

Tube MPC decomposes the system dynamics into two components: (i) a nominal (unperturbed) dynamics, which is utilized for predictions, and (ii) an error dynamics, which lies in a tube (generally a sequence of polytopes around the nominal dynamics) that contains all possible trajectories of the uncertainty. The shape and contractive properties of the tube are generally determined offline, through a fixed pre-stabilizing feedback control law. 

When the noise has bounded support, and in the absence of additional statistical information on the noise (e.g., samples), robust optimization is employed to solve the Robust Tube MPC (RT-MPC) problem \cite{mayne2005robust,langson2004robust}. As the construction results from a worst-case analysis, RT-MPC may often be too conservative. Some methods have emerged in the literature to reduce this conservatism by allowing for more degrees of freedom in the tube construction, e.g., homothetic tube~\cite{rakovic2012homothetic}, elastic tube~\cite{rakovic2016elastic}, parametrized tube~\cite{rakovic2012parameterized}, or interpolating tube~\cite{kogel2020fusing}. Such strategies often increase the robust optimization problem size, resulting in a higher computational cost.

If statistical information about the noise is available, Stochastic Tube MPC (ST-MPC) schemes have been proposed to reduce the conservatism of RT-MPC~\cite{mesbah2016stochastic}. These schemes relax the robust constraints into probabilistic chance constraints, allowing some pre-specified constraint violation probability~\cite{farina2016stochastic}. While generally intractable, such probabilistic constraints can be dealt with via specific approximations or (often conservative) bounds \cite{kouvaritakis2010explicit, cannon2009probabilistic, cannon2010stochastic,lorenzen2016constraint,paulson2019mixed} if the noise distribution is known, or via scenario-based optimization if the noise distribution is only observable through samples~\cite{hewing2019scenario}.

While alleviating some of the RT-MPC conservatism, the two ST-MPC approaches suffer from some major limitations. First, by considering a specific noise distribution (e.g., Gaussian), it fails to guarantee robustness against different (plausible) noise distributions. Secondly, scenario-based ST-MPC is computationally expensive, and generally requires a prohibitive amount of samples for real-time implementation. Moreover, scenario-based ST-MPC fails to robustify against \emph{distribution shifts}~\cite{koh2021wilds}, i.e., when the training noise distribution is different from the testing noise distribution.

To address these shortcomings, more general uncertainty descriptions, which can account for \emph{distributional uncertainty}, i.e., uncertainty about probability distributions, have been recently proposed. Examples include moment ambiguity sets~\cite{van2015distributionally}, Wasserstein ambiguity sets~\cite{yang2020wasserstein, mark2021data, fochesato2022data,zhong2023tube,micheli2022data}, and Total Variation ambiguity sets~\cite{dixit2022distributionally}. Such uncertainty descriptions allow for a higher modeling power, often at the price of being conservative, e.g., moment ambiguity sets, or hard to propagate through the system dynamics, e.g., Wasserstein and Total Variation ambiguity sets. In particular, the latter can often result in crude overestimation or catastrophic underestimation of the distributional uncertainty \cite[Section~III-A]{aolaritei2023capture}.

In this paper, we build upon the recent paradigms of \emph{Wasserstein distributionally robust optimization}~\cite{mohajerin2018data,shafieezadeh2023new} and \emph{propagation of Wasserstein ambiguity sets}~\cite{aolaritei2022uncertainty,aolaritei2023capture}, and we formulate a novel Tube MPC, which we coin \emph{Wasserstein Tube MPC (WT-MPC)}. Assuming that the noise distribution is compactly supported and only observable through a \emph{limited} number of i.i.d.\ noise samples, we show that:
\begin{itemize}
    \item the distributional uncertainty of the state trajectory (inherited from the distributional uncertainty of the noise) can be expressed as the superposition of a deterministic controlled nominal trajectory and an \emph{autonomous Wasserstein tube}. The Wasserstein tube is composed of a sequence of Wasserstein ambiguity sets which are \emph{exactly} (in closed-form) propagated through the system dynamics;

    \item the Wasserstein tube is composed of probability distributions supported on the robust tube (from RT-MPC). Moreover, through one scalar parameter, WT-MPC can interpolate between the data-
    driven formulation based on sample average approximation and RT-MPC. In particular, this property allows WT-MPC to optimally trade between safety and performance;

    \item even in the presence of a small number of samples, WT-MPC can ensure a desired robustness level for the closed-loop system, a smaller closed-loop cost (i.e., increased performance) compared to RT-MPC, and good computational complexity; and

    \item WT-MPC is recursively feasible.
\end{itemize}

The aforementioned benefits of WT-MPC are numerically validated in Section~\ref{sec:numerical}.

\subsection{Mathematical Preliminaries and Notation}
\label{subsec:math}

Throughout the paper, $\probSpace{\mathcal W}$ denotes the space of probability distributions supported on the set $\mathcal W \subseteq \reals^d$, $\diracDelta{x}$ denotes the Dirac delta distribution at $x\in\reals^d$, and $x \sim \P$ denotes the fact that $x$ is distributed according to $\P$. Moreover, $[i:N]$, with $i \leq N$, denotes the set $\{i,\ldots,N\}$, and the symbols $\oplus$ and $\ominus$ denote the Minkowski sum and Pontryagin difference of sets, respectively (see \cite[Section~3.1]{kouvaritakis2016model}). Finally, given a matrix $A$, $\pinv{A}$ denotes its Moore-Penrose pseudoinverse.

In this paper, we focus on two classes of transformations of probability distributions: \emph{pushforward} via linear transformations and the \emph{convolution} with a delta distribution.

\begin{definition}
\label{def:pushforwad}
Let $\P\in\probSpace{\mathcal W}$ and $A  \in \reals^{m \times d}$. The pushforward of $\P$ via the linear map $x\mapsto Ax$ is denoted by $\pushforward{A}\P$, and is defined by $(\pushforward{A}\P)(\mathcal B)\coloneqq \P(A^{-1}(\mathcal B))$, for all Borel sets $\mathcal B\subset A \mathcal W$ (where $A \mathcal W$ is the image of the set $\mathcal W$ through the linear map $A$).
\end{definition}

Intuitively, $\pushforward{A}\P$ is the probability distribution that results from moving the probability mass at $x \in \mathcal W$ to $A x \in A \mathcal W$. Equivalently, if $x\sim\P$, then $\pushforward{A}\P$ is the probability distribution of the random variable $y=Ax$.

\begin{example}
\label{ex:pushforwad:empirical}
Let $\empiricalP{}=\frac{1}{n}\sum_{i=1}^n\delta_{\widehat{x}^{(i)}}$ be an empirical distribution supported on the samples $\{\widehat{x}^{(i)}\}_{i=1}^n$. Then, $\pushforward{A}\empiricalP{}=\frac{1}{n}\sum_{i=1}^n\delta_{A \widehat{x}^{(i)}}$ is empirical as well, supported on the propagated samples $\{A \widehat{x}^{(i)}\}_{i=1}^n$.
\end{example}

Moreover, given $x \sim \P$ on $\reals^d$ and $y \in \reals^d$, $x+y$ is distributed according to the convolution $\delta_y\ast\P$ defined below.

\begin{definition}
\label{def:convolution}
Let $\P\in\probSpace{\reals^d}$ and $y \in \reals^d$. Then, the convolution of $\P$ and $\delta_y$ is denoted by $\delta_y \ast \P$, and is defined by $(\delta_y \ast \P)(\mathcal A) = \P(\mathcal A \ominus y)$, for all Borel sets $\mathcal A\subset\reals^d$.
\end{definition}

Finally, we are interested in probabilistic constraints based on the \emph{conditional value-at-risk (CVaR)}, formally defined as follows. Given $f:\reals^d \to \reals$ and a random variable $x$ distributed according to $\Q$, the CVaR of the random variable $f(x)$ at probability level $1-\gamma$ is defined as
\begin{align}
\label{eq:CVaR}
    \text{CVaR}_{1-\gamma}^{\Q}(f(x)) = \inf_{\tau \in \reals}\; \tau + \frac{1}{\gamma} \mathbb E_{\mathbb Q} \left[ \max\{0,f(x)-\tau\} \right].
\end{align}
More intuition about CVaR will be offered in Section~\ref{subsec:DR-CC}.

% --------------------------------------------------------------------
% --------------------------------------------------------------------
% --------------------------------------------------------------------

\section{Wasserstein Tube MPC}
\label{sec:Wasserstein:DR-MPC}

We consider the discrete-time linear time-invariant system
\begin{align}
\label{eq:stoch:dyn:sys}
\begin{split}
    x_{t+1} &= A x_t + B u_t + w_t\\
    u_t &= K x_t + c_t,
\end{split}
\end{align}
where the matrices $A \in \reals^{d \times d}$, $B \in \reals^{d \times m}$ are known, the initial condition $x_0 \in \reals^d$ is known and deterministic, and the stochastic noise sequence $\{w_t\}_{t\in\mathbb N} \subset \reals^d$ is i.i.d.\ according to an \emph{unknown} distribution $\P$. Moreover, we consider a \emph{fixed stabilizing} feedback gain matrix $K$, i.e., $A+BK$ is Schur stable. We make the following assumption on the noise.

\begin{assumption}~
\label{assump:noise}
\begin{itemize}
    \item[(i)] $\P$ has compact polyhedral support
    \begin{align*}
        \mathcal W = \{\xi \in \mathbb R^d:\, F \xi \leq g\}.
    \end{align*}

    \item[(ii)] The origin belongs to $\mathcal W$, i.e., $0 \in \mathcal W$.

    \item[(iii)] We have access to $n_0$ i.i.d.\ samples $\{\widehat{w}^{(i)}\}_{i=1}^{n_0}$ from $\P$.
\end{itemize}
\end{assumption}

Since we only have access to a finite number of samples from the unknown noise distribution $\P$, we are faced with \emph{distributional uncertainty}, i.e., uncertainty about probability distributions. In what follows, we employ the methods developed in \cite{aolaritei2023capture,aolaritei2022uncertainty}, which lay the foundation to capture and propagate distributional uncertainty in dynamical systems.

% --------------------------------------------------------------------

\subsection{Capture and Propagate Distributional Uncertainty}
\label{subsec:LTI:capture}

We start by defining, for any $t\in \mathbb N$, the vector $\mathbf{w}_{[t-1]} = \begin{bmatrix}w_{t-1}^\top & \ldots & w_0^\top \end{bmatrix}^\top$. Then, the distributional uncertainty of the state $x_t$ is naturally inherited from the distributional uncertainty of the noise trajectory $\mathbf{w}_{[t-1]}$. Therefore, in order to capture the distributional uncertainty of $x_t$, we first need to capture the distributional uncertainty of $\mathbf{w}_{[t-1]}$, and then to propagate it, through the system dynamics~\eqref{eq:stoch:dyn:sys}, to $x_t$.

We start by constructing $n \in \mathbb N$ noise sample \emph{trajectories} 
\begin{align*}
    \widehat{\mathbf{w}}_{[t-1]}^{(i)}:= \begin{bmatrix}(\widehat{w}_{t-1}^{(i)})^\top & \ldots & (\widehat{w}_0^{(i)})^\top \end{bmatrix}^\top,
\end{align*}
for $i\in[1:n]$, using the $n_0$ available noise samples. Since the noise is i.i.d., we can easily construct such sample trajectories by letting each entry $\widehat{w}_j^{(i)}$, with $j \in \{0,\ldots,t-1\}$ and $i\in \{1,\ldots,n\}$, be an arbitrary sample from $\{\widehat{w}^{(i)}\}_{i=1}^{n_0}$. We then define the empirical probability distribution on the product set $\mathcal W^{t} = \mathcal W \otimes \ldots \otimes \mathcal W$, with $t$ terms, 
\begin{align*}
    \empiricalP{[t-1]} := \frac{1}{n} \sum_{i=1}^n \delta_{\widehat{\mathbf{w}}_{[t-1]}^{(i)}}.
\end{align*}

Following the approach presented in \cite{aolaritei2023capture,aolaritei2022uncertainty}, we capture the distributional uncertainty of $\mathbf{w}_{[t-1]}$ via \emph{Wasserstein ambiguity sets}, i.e., balls of probability distributions, defined using the Wasserstein distance, and centered at the empirical distribution $\empiricalP{[t-1]}$. %To keep the paper self-contained, we briefly recall these concepts in what follows. 
For $\Q\in\probSpace{\mathcal W^t}$, the (type-$1$) \emph{Wasserstein distance}~\cite{OT} between $\Q$ and $\empiricalP{[t-1]}$ is defined by  
\begin{align*}
%\label{eq:Wasserstein:distance}
    \transportCost{\|\cdot\|_2}{\Q}{\empiricalP{[t-1]}}
    \coloneqq 
    \inf_{\pi\in\Pi}\int_{\mathcal W^t\times \mathcal W^t}\|x_1-x_2\|_2 \,\d\pi(x_1,x_2),
\end{align*}
where $\Pi:=\setPlans{\Q}{\empiricalP{[t-1]}}$ is the set of all probability distributions over $\mathcal W^t\times\mathcal W^t$ with marginals $\Q$ and $\empiricalP{[t-1]}$. The semantics are as follows: we seek the minimum cost to transport the probability distribution $\Q$ onto the probability distribution $\empiricalP{[t-1]}$, when transporting a unit of mass from $x_1$ to $x_2$ costs $\|x_1-x_2\|_2$. Intuitively, $\transportCost{\|\cdot\|_2}{\Q}{\empiricalP{[t-1]}}$ quantifies the discrepancy between $\Q$ and $\empiricalP{[t-1]}$ and it naturally provides us with a definition of ambiguity in $\probSpace{\mathcal W^t}$. Specifically, the \emph{Wasserstein ambiguity set} (henceforth simply referred to as \emph{ambiguity set}) of radius $\varepsilon$, centered at $\empiricalP{[t-1]}$, and with support $\mathcal W^t$ is defined by 
\begin{align*}
%\label{eq:W:amb:set}
    \ball{\varepsilon}{\|\cdot\|_2}{\empiricalP{[t-1]}}
    \coloneqq 
    \{\Q\in\probSpace{\mathcal W^t}: \transportCost{\|\cdot\|_2}{\Q}{\empiricalP{[t-1]}}\leq\varepsilon\}.
\end{align*}
In words, $\ball{\varepsilon}{c}{\empiricalP{[t-1]}}$ includes all probability distributions on $\mathcal W^t$ onto which $\empiricalP{[t-1]}$ can be transported with a budget of at most $\varepsilon$. Such ambiguity sets are shown in \cite{aolaritei2023capture,aolaritei2022uncertainty} to be a very natural and principled tool to capture distributional uncertainty, enjoying powerful geometrical, statistical, and computational features and guarantees. Moreover, they are easily propagated through linear maps, and the result of the propagation is itself an ambiguity set.

\begin{remark}
The ambiguity radius $\varepsilon$ is a tunable parameter that encapsulates the robustness (or risk aversion) level. Higher $\varepsilon$ translates to more distributions being captured within $\ball{\varepsilon}{\|\cdot\|_2}{\empiricalP{[t-1]}}$, and consequently more robustness against unforeseen noise realizations being introduced. In \cite[Lemma~3]{aolaritei2023capture}, the authors explain how to choose $\varepsilon$ and the noise sample trajectories $\{\widehat{\mathbf{w}}_{[t-1]}^{(i)}\}_{i=1}^n$ to guarantee that the underlying (unknown) true distribution of the noise trajectory $\mathbf{w}_{[t-1]}$ is contained in $\ball{\varepsilon}{\|\cdot\|_2}{\empiricalP{[t-1]}}$ with high probability. %While such an approach is statistically principled, it also leads to a number $n$ of sample trajectories that grows exponentially in $n_0$, having a direct impact on the computational tractability of the problem (see Proposition~\ref{prop:DR:CVaR} for details). Therefore, we are faced with a natural statistics vs computation trade-off, and in practice $\varepsilon$ and $n$ are generally chosen based on cross-validation, to achieve the desired numerical trade-off level.
\end{remark}

We are now ready to study the propagation of the distributional uncertainty from the noise $\mathbf{w}_{[t-1]}$ to the state $x_t$. With the aim of formulating a Tube MPC (see \cite[Chapter~3.2]{kouvaritakis2016model}), we start by rewriting the state as $x_t = z_t + e_t$, i.e., the sum of a deterministic \emph{nominal state} $z_t$, and a stochastic \emph{error state} $e_t$. This gives rise to the equivalent system dynamics
\begin{subequations}
\label{eq:stoch:dyn:sys:x}
\begin{align}
\label{eq:stoch:dyn:sys:z}
    z_{t+1} &= A z_{t} + B v_{t}\\
\label{eq:stoch:dyn:sys:v}
    v_{t} &= K z_{t} + c_{t}\\
\label{eq:stoch:dyn:sys:e}
    e_{t+1} &= A_K e_{t} + w_{t},
\end{align}
\end{subequations}
with $A_K := A+BK$, and with initial conditions $z_{0} = x_0$ and $e_{0} = 0$. Such state separation will allow us to represent the distributional uncertainty of the state trajectory as the superposition of the deterministic controlled nominal state trajectory $z_t$ and an \emph{autonomous Wasserstein tube}. To do so, we start by rewriting the error dynamics~\eqref{eq:stoch:dyn:sys:e} in the form
\begin{align}
\label{eq:stoch:dyn:sys:e:0-t}
\begin{split}
    e_{t} &= \mathbf{D}_{t-1} \mathbf{w}_{[t-1]} \\
    \mathbf{D}_{t-1} &:= \begin{bmatrix} I & A_K & \ldots & A_K^{t-1} \end{bmatrix}.
\end{split}
\end{align}
Moreover, we denote by $\{\widehat{e}_t^{(i)}\}_{i=1}^n$ the $n$ error state samples, obtained by feeding into~\eqref{eq:stoch:dyn:sys:e:0-t} the $n$ noise sample trajectories $\{\widehat{\mathbf{w}}_{[t-1]}^{(i)}\}_{i=1}^n$, i.e., \begin{align*}
    \widehat{e}_t^{(i)} := \mathbf{D}_{t-1} \widehat{\mathbf{w}}_{[t-1]}^{(i)}, \quad \forall i \in [1:n].
\end{align*}

The following proposition shows that the distributional uncertainty of $x_t$ can be exactly captured.

\begin{figure}
\centering
\includegraphics[width=\linewidth]{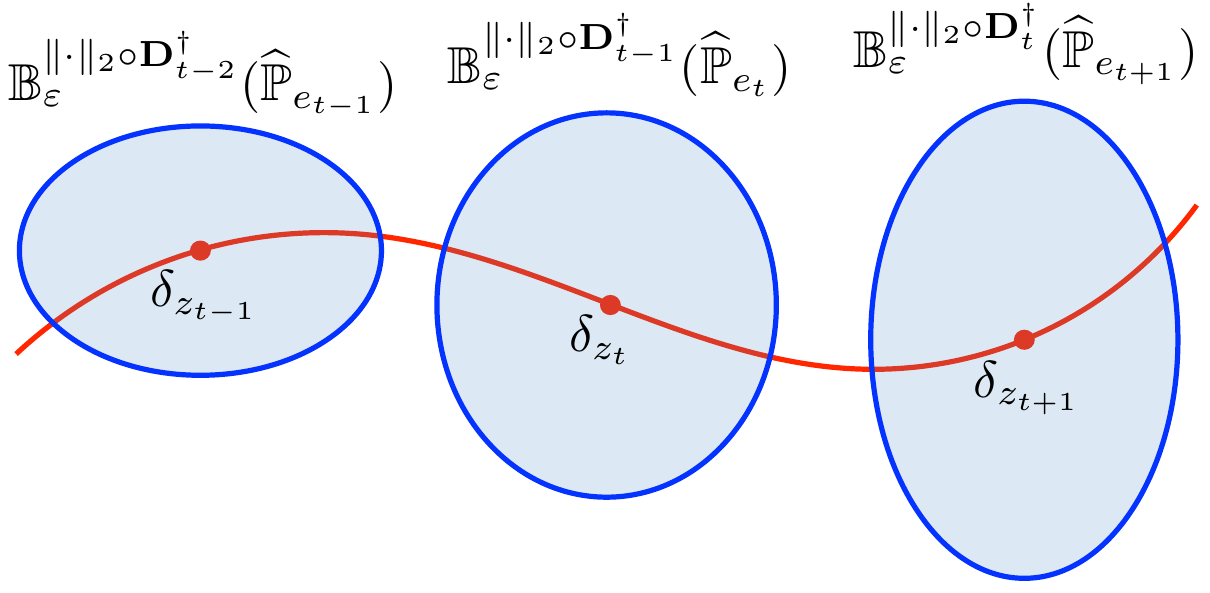}
\caption{Illustration of the nominal state trajectory $\delta_{z_t}$ in $\mathcal P(\reals^d)$, surrounded by the Wasserstein tube $\ball{\varepsilon}{\|\cdot\|_2\circ \pinv{\mathbf{D}_{t-1}}}{\widehat{\P}_{{e}_t}}$, which captures the distributional uncertainty. Together, they capture exactly the distributional uncertainty of the state $x_t$.}
\label{fig:prop:amb:set}
\end{figure}

\begin{proposition}
\label{prop:ambiguity:set:state}
Let Assumption~\ref{assump:noise} hold, and consider the linear control system~\eqref{eq:stoch:dyn:sys}, with i.i.d.\ noise $\{w_t\}_{t\in\mathbb N}$. Moreover, let $\ball{\varepsilon}{\|\cdot\|_2}{\empiricalP{[t-1]}}$ capture the distributional uncertainty of the noise trajectory $\mathbf{w}_{[t-1]}$. Then the distributional uncertainty of $x_t$ is exactly captured by the ambiguity set
\begin{align}
\label{eq:ambiguity:x_t}
    \mathbb{S}_t := \ball{\varepsilon}{\|\cdot\|_2\circ \pinv{\mathbf{D}_{t-1}}}{\widehat{\P}_{{x}_t}},
\end{align}
centered at $\widehat{\P}_{{x}_t}:= \frac{1}{n}\sum_{i=1}^n \delta_{z_t+\widehat{e}_t^{(i)}}$, and with support $z_t \oplus \mathcal E_t$, for $\mathcal E_t := \mathbf{D}_{t-1} \mathcal W^t$. 
\end{proposition}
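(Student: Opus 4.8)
The plan is to build $\mathbb{S}_t$ from the decomposition $x_t=z_t+e_t$ used in the error dynamics, propagating the noise-trajectory ambiguity set in two stages: (i) a linear pushforward through $\mathbf{D}_{t-1}$, which produces the ambiguity set of the error state $e_t$; and (ii) a convolution with $\delta_{z_t}$, which rigidly translates it by the deterministic nominal state. Since $z_t$ is deterministic and $e_t=\mathbf{D}_{t-1}\mathbf{w}_{[t-1]}$, these two operations exhaust all the randomness in $x_t$, so exactly identifying the ambiguity set after each stage yields the claim.

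\emph{Stage (i).} By \eqref{eq:stoch:dyn:sys:e:0-t} and Definition~\ref{def:pushforwad}, if $\mathbf{w}_{[t-1]}\sim\Q$ then $e_t\sim\pushforward{\mathbf{D}_{t-1}}\Q$; hence the set of attainable laws of $e_t$ is $\pushforward{\mathbf{D}_{t-1}}\,\ball{\varepsilon}{\|\cdot\|_2}{\empiricalP{[t-1]}}$. I would invoke the closed-form linear-propagation identity of \cite{aolaritei2022uncertainty,aolaritei2023capture}, namely
\[
\pushforward{\mathbf{D}_{t-1}}\,\ball{\varepsilon}{\|\cdot\|_2}{\empiricalP{[t-1]}}=\ball{\varepsilon}{\|\cdot\|_2\circ\pinv{\mathbf{D}_{t-1}}}{\pushforward{\mathbf{D}_{t-1}}\empiricalP{[t-1]}},
\]
with propagated support $\mathbf{D}_{t-1}\mathcal W^t=\mathcal E_t$, and identify the new center via Example~\ref{ex:pushforwad:empirical}: $\pushforward{\mathbf{D}_{t-1}}\empiricalP{[t-1]}=\tfrac{1}{n}\sum_{i=1}^n\delta_{\mathbf{D}_{t-1}\widehat{\mathbf{w}}_{[t-1]}^{(i)}}=\tfrac{1}{n}\sum_{i=1}^n\delta_{\widehat e_t^{(i)}}=:\widehat{\P}_{e_t}$. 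For self-containedness I would sketch why the induced ground cost is the pseudoinverse seminorm $\|\cdot\|_2\circ\pinv{\mathbf{D}_{t-1}}$: the inclusion $\subseteq$ follows by pushing a (near-)optimal coupling of $(\Q,\empiricalP{[t-1]})$ through $(\mathbf{D}_{t-1},\mathbf{D}_{t-1})$ and using that $\pinv{\mathbf{D}_{t-1}}\mathbf{D}_{t-1}$ is the orthogonal projector onto $\kernel(\mathbf{D}_{t-1})^\perp$, hence $1$-Lipschitz, so the transport cost does not increase and remains $\le\varepsilon$; the reverse inclusion $\supseteq$ follows by lifting — disintegrating an optimal plan between the discrete center $\widehat{\P}_{e_t}$ and any target law along the atoms $\widehat e_t^{(i)}$, and reconstructing a pre-image coupling using the minimum-norm lift $\widehat{\mathbf{w}}_{[t-1]}^{(i)}+\pinv{\mathbf{D}_{t-1}}(\,\cdot\,-\widehat e_t^{(i)})$ of each displacement.

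\emph{Stage (ii).} Since $z_t$ is deterministic, $x_t=z_t+e_t$ is distributed as $\delta_{z_t}\ast(\text{law of }e_t)$ by Definition~\ref{def:convolution}. Translation by a fixed vector is an isometry of $\reals^d$ and leaves $\|\cdot\|_2\circ\pinv{\mathbf{D}_{t-1}}$ invariant (the displacement $y_1-y_2$ is unchanged under $y_i\mapsto y_i+z_t$), so convolution with $\delta_{z_t}$ commutes with the Wasserstein ball: $\delta_{z_t}\ast\ball{\varepsilon}{\|\cdot\|_2\circ\pinv{\mathbf{D}_{t-1}}}{\widehat{\P}_{e_t}}=\ball{\varepsilon}{\|\cdot\|_2\circ\pinv{\mathbf{D}_{t-1}}}{\delta_{z_t}\ast\widehat{\P}_{e_t}}$, with support $z_t\oplus\mathcal E_t$. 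Finally $\delta_{z_t}\ast\widehat{\P}_{e_t}=\tfrac{1}{n}\sum_{i=1}^n\delta_{z_t+\widehat e_t^{(i)}}=\widehat{\P}_{x_t}$, and combining the two stages gives that the set of attainable laws of $x_t$ is exactly $\ball{\varepsilon}{\|\cdot\|_2\circ\pinv{\mathbf{D}_{t-1}}}{\widehat{\P}_{x_t}}=\mathbb S_t$, supported on $z_t\oplus\mathcal E_t$.

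The main obstacle is the \emph{exactness} of Stage (i) — i.e., showing that every law in $\ball{\varepsilon}{\|\cdot\|_2\circ\pinv{\mathbf{D}_{t-1}}}{\widehat{\P}_{e_t}}$ is genuinely the pushforward of some $\Q$ in the source ball, given that $\mathbf{D}_{t-1}$ is non-injective and that both the source and target laws are constrained to the compact polyhedra $\mathcal W^t$ and $\mathcal E_t$. The delicate point is that the minimum-norm lift of a displacement need not remain in $\mathcal W^t$, so one must argue that an admissible lift with no larger transport cost still exists; here the structure of $\mathbf{D}_{t-1}$ (leading identity block, hence full row rank) and Assumption~\ref{assump:noise} come into play, and I would either reproduce the corresponding propagation lemma from \cite{aolaritei2022uncertainty,aolaritei2023capture} or cite it directly. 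The convolution stage and the bookkeeping of centers and supports are then routine.
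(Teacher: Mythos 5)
Your proposal is correct and follows essentially the same two-step route as the paper's proof: first the exact linear pushforward of the noise-trajectory ambiguity set through $\mathbf{D}_{t-1}$ (invoking the propagation theorem of \cite{aolaritei2023capture,aolaritei2022uncertainty}, with the center identified as in Example~\ref{ex:pushforwad:empirical}), then the convolution with $\delta_{z_t}$. Your additional sketch of the two inclusions behind the pushforward identity, and your flagging of the lift-feasibility issue, correctly locate the one nontrivial step, which the paper likewise delegates to the cited theorem (via orthomonotonicity of $\|\cdot\|_2$ and full row-rank of $\mathbf{D}_{t-1}$).
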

\begin{proof}
    See Section~\ref{appendix:amb:set:x}.
\end{proof}

\begin{remark}
\label{remark:stoch:tubes}
Expression~\eqref{eq:ambiguity:x_t} reveals that the distributional uncertainty of the state trajectory can be represented in the probability space $\probSpace{\reals^d}$ as the superposition of the nominal state trajectory $\delta_{z_t}$ (the probabilistic representation of $z_t$) and the \emph{Wasserstein tube} $\ball{\varepsilon}{\|\cdot\|_2\circ \pinv{\mathbf{D}_{t-1}}}{\widehat{\P}_{{e}_t}}$, with center $\widehat{\P}_{{e}_t}:=\frac{1}{n}\sum_{i=1}^n \delta_{\widehat{e}_t^{(i)}}$ and support $\mathcal E_t$. This follows immediately from \cite[Theorem~3.7]{aolaritei2022uncertainty}, which ensures that
\begin{align*}
    \mathbb{S}_t = \delta_{z_t} \ast \ball{\varepsilon}{\|\cdot\|_2\circ \pinv{\mathbf{D}_{t-1}}}{\widehat{\P}_{{e}_t}}.
\end{align*}
We illustrate the Wasserstein tube in Figure~\ref{fig:prop:amb:set}. Wasserstein tubes are a natural generalization of the standard robust tubes  (used in RT-MPC), as explained next. Let $\text{diam}(\mathcal E_t)$ denote the diameter of $\mathcal E_t$, measured using the distance $\|\cdot\|_2\circ \pinv{\mathbf{D}_{t-1}}$. Then, through the choice of $\varepsilon$, ranging from $0$ to $\text{diam}(\mathcal E_t)$, the Wasserstein tube $\ball{\varepsilon}{\|\cdot\|_2\circ \pinv{\mathbf{D}_{t-1}}}{\widehat{\P}_{{e}_t}}$ interpolates between the empirical distribution $\frac{1}{n}\sum_{i=1}^n \delta_{\widehat{e}_t^{(i)}}$ and the probabilistic representation of the robust tube $\mathcal E_t$, i.e., the set of all Dirac delta distributions $\delta_\xi$, $\forall \xi \in \mathcal E_t$.  
\end{remark}

In what follows, we inspect the four components of the ambiguity set~\eqref{eq:ambiguity:x_t}.

\begin{itemize}
    \item \textit{Ambiguity radius $\varepsilon$}. This quantity is naturally inherited from the ambiguity set $\ball{\varepsilon}{\|\cdot\|_2}{\empiricalP{[t-1]}}$ that models the distributional uncertainty of the noise trajectory.

    \smallskip  
    \item \textit{Center $\widehat{\P}_{{x}_t}$}. This is an empirical distribution over the $n$ points $\{z_t+\widehat{e}_t^{(i)}\}_{i=1}^n$. Notice that the position of these points in $\mathbb R^d$ is controlled by the feedforward input $c_t$.

    \smallskip  
    \item \textit{Transportation cost $\|\cdot\|_2\circ \pinv{\mathbf{D}_{t-1}}$}. This is defined as
    \begin{align*}
        \left(\|\cdot\|_2\circ \pinv{\mathbf{D}_{t-1}}\right)(\xi) := \|\pinv{\mathbf{D}_{t-1}} \xi\|_2,
    \end{align*}
    and it influences the shape of the ambiguity set, as explained next. Using the SVD decomposition $\mathbf{D}_{t-1} = U \Sigma V^\top$, with $\{\sigma_i\}_{i=1}^d$ the singular values of $\mathbf{D}_{t-1}$ and $\{u_i\}_{i=1}^d$ the orthonormal columns of $U$, the transportation cost boils down to
    \begin{align}
    \label{eq:amb:set:cost}
        \|\pinv{\mathbf{D}_{t-1}}(x_1-x_2)\|_2 = \sqrt{\sum_{i=1}^d \frac{1}{\sigma_i^2} \left|u_i^\top (x_1-x_2)\right|^2}.
    \end{align}
    This shows that the cost of moving probability mass from the center distribution in the direction $u_i$ costs $\norm{x_1-x_2}/\sigma_i$. The feedback gain matrix $K$ has an indirect influence on the amount of mass moved in this direction through the singular value $\sigma_i$ of the matrix $\mathbf{D}_{t-1}$. Specifically, the higher the value of $\sigma_i$, the more probability mass is moved in the direction $u_i$. Similarly, the lower the value of $\sigma_i$, the less probability mass is moved in the direction $u_i$.

    \smallskip  
    \item \textit{Support set $z_t \oplus \mathcal E_t$}. Since $\mathcal W$ is compact and polyhedral, we have that the set $z_t \oplus \mathcal E_t$ is compact and polyhedral, and can be written as
    \begin{align}
    \label{eq:support:amb:set}
        z_t \oplus \mathcal E_t = \{\xi \in \mathbb R^d:\, F_t \xi \leq g_t + F_t z_t\}.
    \end{align}
    for some $q_t \in \mathbb N$, $F_t \in \reals^{q_t \times d}$, and $g_t \in \reals^{q_t}$.
    Moreover, $F_t$ and $g_t$ can be obtained from the following iteration
    \begin{align*}
        \mathcal E_t = A_K \mathcal E_{t-1} \oplus \mathcal W,\quad
        \mathcal E_0 = \{0\}.
    \end{align*}
\end{itemize}

\begin{remark}
    The careful inspection of the ambiguity set~\eqref{eq:ambiguity:x_t} reveals an interesting phenomenon: the feedforward term $c_t$ can control (through $z_t$) the \emph{position} in $\reals^d$ of the center distribution $\widehat{\P}_{{x}_t}$. However, $c_t$ has no influence over the \emph{shape and size} of the ambiguity set (i.e., the transportation cost and radius). In particular, these are exclusively influenced by the feedback gain matrix of $K$ (through $\pinv{\mathbf{D}_{t-1}}$). For more details on the decomposition of the roles of $c_t$ and $K$, we refer to \cite[Section~IV]{aolaritei2023capture}.
\end{remark}

% --------------------------------------------------------------------

\subsection{Distributionally Robust CVaR Constraints}
\label{subsec:DR-CC}

Armed with the closed-form expression $\mathbb{S}_t$ for the ambiguity set that captures the distributional uncertainty of the state $x_t$, we can now study how to impose constraints. Specifically, we define the polyhedral constraint set
\begin{align*}
    \mathcal X := \left\{x \in \reals^d:\, \max_{j \in [J]} a_j^\top x + b_j \leq 0,\; J \in \mathbb N\right\},
\end{align*}
and we want to guarantee that, for some $\gamma \in (0,1)$, the distributionally robust CVaR (DR-CVaR) constraint
\begin{align}
\label{eq:CVaR:constraint}
    \sup_{\Q \in \mathbb{S}_t} \text{CVaR}_{1-\gamma}^{\Q}\left(\max_{j \in [J]} a_j^\top x_t + b_j\right) \leq 0
\end{align}
is satisfied (see~\eqref{eq:CVaR} for the definition of CVaR). Such constraints are very natural for control tasks in the face of distributional uncertainty, where safety is of interest. We motivate this in what follows.
\begin{itemize}
    \item \emph{Safety considerations.} CVaR constraints are by now standard in risk averse optimization \cite{shapiro2021lectures}. From \cite{nemirovski2007convex} we know that \eqref{eq:CVaR:constraint} implies the following nonconvex distributionally robust chance constraint (DR-CC):
    \begin{align}
    \label{eq:CC:constraint}
        \inf_{\Q \in \mathbb{S}_t} \Q (x_t \in \mathcal X) \geq 1-\gamma.
    \end{align}
    Using Proposition~\ref{prop:ambiguity:set:state}, this guarantees that $x_t \in \mathcal X$, with probability $1-\gamma$, for all the noise trajectory distributions in $\ball{\varepsilon}{\|\cdot\|_2}{\empiricalP{[t-1]}}$. 
    Additionally, \eqref{eq:CVaR:constraint} guarantees that $x_t \in \mathcal X$ in expectation for the most averse noise realizations of probability $\gamma$. This naturally controls the distance of $x_t$ from the set $\mathcal X$ for the remaining probability $\gamma$. Differently, notice that if we only impose \eqref{eq:CC:constraint}, then $x_t$ could be arbitrarily far from $\mathcal X$ with probability $\gamma$.

    \smallskip  
    \item \emph{Computational tractability.} Using results from distributionally robust optimization (DRO) (see \cite{mohajerin2018data} and \cite{shafieezadeh2023new}), in Proposition~\ref{prop:DR:CVaR} we show that the DR-CVaR constraint~\eqref{eq:CVaR:constraint}  can be exactly reformulated as a finite set of deterministic convex constraints.
\end{itemize}

\begin{proposition}
\label{prop:DR:CVaR}
Let Assumption~\ref{assump:noise} hold. Then, the constraint~\eqref{eq:CVaR:constraint} is equivalent to the following set of convex constraints, whose feasible region we denote by $\Gamma_t$: 
\begin{align*}
&\quad\forall i \in [1:n],\; \forall j \in [1:J+1]:\\
&\quad\;\;\begin{cases}
\tau \in \reals, \lambda \in \reals_+, s_i \in \reals, \zeta_{i j} \in \reals^{q_t}_+ &
\\
\lambda \varepsilon n + \sum_{i}^{n} s_{i=1} \leq 0 &
\\
\alpha_j^\top (z_{t} + \widehat{e}_t^{(i)}) + \beta_j(\tau)  + \zeta_{ij}^\top \left(g_t - F_t \widehat{e}_t^{(i)}\right) \leq s_i &
\\
\left\| \pinv{\mathbf{D}_{t-1}} \left( (\pinv{\mathbf{D}_{t-1}})^\top \pinv{\mathbf{D}_{t-1}} \right)^{-1} \left(F_t^\top \zeta_{ij} - \alpha_j \right) \right\|_2 \leq \lambda, &
\end{cases}
\end{align*}
with $\alpha_j := a_j/\gamma$ and $\beta_j(\tau) := (b_j + \gamma \tau - \tau)/\gamma$, for $j \in [1:J]$, as well as $\alpha_{J+1} := 0$ and $\beta_{J+1}(\tau):=\tau$.
\end{proposition}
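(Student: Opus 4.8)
The plan is to reformulate the left-hand side of \eqref{eq:CVaR:constraint} in three successive steps, each of which is an equivalence because it rests on an \emph{exact} duality. Throughout, set $f(x):=\max_{j\in[1:J]}(a_j^\top x+b_j)$ and $\ell_\tau(x):=\max_{j\in[1:J+1]}(\alpha_j^\top x+\beta_j(\tau))$, with $\alpha_j,\beta_j(\tau)$ as in the statement. \emph{Step 1 (linearizing the CVaR).} First I would check the pointwise identity $\tau+\tfrac1\gamma\max\{0,f(x)-\tau\}=\ell_\tau(x)$ for all $\tau\in\reals$; the index $j=J+1$, with $\alpha_{J+1}=0$ and $\beta_{J+1}(\tau)=\tau$, supplies the ``$0$''-branch of the maximum. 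By \eqref{eq:CVaR} and linearity of expectation, $\text{CVaR}_{1-\gamma}^{\Q}(f(x_t))=\inf_{\tau\in\reals}\mathbb E_{\Q}[\ell_\tau(x_t)]$. Since $(\tau,\Q)\mapsto\mathbb E_{\Q}[\ell_\tau(x_t)]$ is convex in $\tau$, affine and weakly continuous in $\Q$ (all members of $\mathbb S_t$ are supported on the compact polytope $z_t\oplus\mathcal E_t$), and $\mathbb S_t$ is convex and weakly compact, Sion's minimax theorem allows exchanging $\sup_\Q$ and $\inf_\tau$, and the resulting outer infimum over $\tau$ is attained ($\gamma<1$ and boundedness of $f$ on the support make the objective coercive in $\tau$). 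Hence \eqref{eq:CVaR:constraint} holds if and only if there exists $\tau\in\reals$ with $\sup_{\Q\in\mathbb S_t}\mathbb E_{\Q}[\ell_\tau(x_t)]\leq 0$.

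\emph{Step 2 (Wasserstein-DRO duality).} Fix such a $\tau$. The inner quantity is a worst-case expectation of the piecewise-affine, upper semicontinuous loss $\ell_\tau$ over the type-$1$ Wasserstein ball $\mathbb S_t=\ball{\varepsilon}{\|\cdot\|_2\circ\pinv{\mathbf D_{t-1}}}{\widehat{\P}_{x_t}}$, whose transportation cost is the \emph{bona fide} norm $x\mapsto\|\pinv{\mathbf D_{t-1}}x\|_2$ (here $\pinv{\mathbf D_{t-1}}$ is injective since $\mathbf D_{t-1}$ has full row rank, containing the $d\times d$ identity block) and whose support is $z_t\oplus\mathcal E_t=\{x:F_tx\leq g_t+F_tz_t\}$. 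Invoking the strong-duality theorem for Wasserstein DRO \cite{mohajerin2018data,shafieezadeh2023new} and writing $\widehat x_t^{(i)}:=z_t+\widehat e_t^{(i)}$, I get
\begin{align*}
\sup_{\Q\in\mathbb S_t}\mathbb E_{\Q}[\ell_\tau(x_t)]=\inf_{\lambda\geq 0}\Big\{\lambda\varepsilon+\frac{1}{n}\sum_{i=1}^n s_i\Big\},
\end{align*}
with $s_i:=\sup\{\ell_\tau(x)-\lambda\|\pinv{\mathbf D_{t-1}}(x-\widehat x_t^{(i)})\|_2:\ F_tx\leq g_t+F_tz_t\}$. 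Promoting the $s_i$ to epigraph variables and distributing the outer $\max_j$ inside $\ell_\tau$ over the bound $(\cdot)\leq s_i$, the condition ``$\sup_\Q\mathbb E_{\Q}[\ell_\tau]\leq0$'' becomes: $\lambda\geq0$, $\lambda\varepsilon n+\sum_{i=1}^n s_i\leq0$, and, for all $i\in[1:n]$, $j\in[1:J+1]$,
\begin{align*}
\sup_{x:\,F_tx\leq g_t+F_tz_t}\big[\alpha_j^\top x+\beta_j(\tau)-\lambda\|\pinv{\mathbf D_{t-1}}(x-\widehat x_t^{(i)})\|_2\big]\leq s_i .
\end{align*}

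\emph{Step 3 (dualizing the inner maximizations).} For each $(i,j)$ I substitute $\delta=x-\widehat x_t^{(i)}$ and use $\widehat e_t^{(i)}=\mathbf D_{t-1}\widehat{\mathbf w}_{[t-1]}^{(i)}\in\mathbf D_{t-1}\mathcal W^t=\mathcal E_t$ (hence $F_t\widehat e_t^{(i)}\leq g_t$, so $\delta=0$ is feasible and $\{\delta:F_t\delta\leq g_t-F_t\widehat e_t^{(i)}\}$ is a nonempty compact polytope), reducing the left-hand side to $\alpha_j^\top\widehat x_t^{(i)}+\beta_j(\tau)+\sup\{\alpha_j^\top\delta-\lambda\|\pinv{\mathbf D_{t-1}}\delta\|_2:\ F_t\delta\leq g_t-F_t\widehat e_t^{(i)}\}$. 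This is a concave maximization over a nonempty polyhedron with finite value, so Lagrangian strong duality holds with dual attainment; dualizing the linear inequality with $\zeta_{ij}\in\reals^{q_t}_+$ turns it into $\zeta_{ij}^\top(g_t-F_t\widehat e_t^{(i)})+\sup_{\delta\in\reals^d}[(\alpha_j-F_t^\top\zeta_{ij})^\top\delta-\lambda\|\pinv{\mathbf D_{t-1}}\delta\|_2]$. The last supremum, the conjugate of $\lambda\|\pinv{\mathbf D_{t-1}}\cdot\|_2$, is $0$ when $\|\alpha_j-F_t^\top\zeta_{ij}\|_\star\leq\lambda$ and $+\infty$ otherwise, where $\|v\|_\star:=\sup\{v^\top x:\|\pinv{\mathbf D_{t-1}}x\|_2\leq1\}$. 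Since $\pinv{\mathbf D_{t-1}}$ has full column rank, a short computation gives $\|v\|_\star=\sqrt{v^\top\big((\pinv{\mathbf D_{t-1}})^\top\pinv{\mathbf D_{t-1}}\big)^{-1}v}=\big\|\pinv{\mathbf D_{t-1}}\big((\pinv{\mathbf D_{t-1}})^\top\pinv{\mathbf D_{t-1}}\big)^{-1}v\big\|_2$, exactly the norm in the SOCP constraint of $\Gamma_t$ (applied to $v=F_t^\top\zeta_{ij}-\alpha_j$; the sign inside the norm is immaterial). Collecting terms, the ``$\sup$''-part contributes $\alpha_j^\top(z_t+\widehat e_t^{(i)})+\beta_j(\tau)+\zeta_{ij}^\top(g_t-F_t\widehat e_t^{(i)})\leq s_i$ and the finiteness condition contributes the SOCP constraint; together with $\lambda\geq0$, $\lambda\varepsilon n+\sum_i s_i\leq0$, and the outer existential $\tau\in\reals$, this is precisely $\Gamma_t$.

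\emph{Main obstacle.} The crux is making the two strong-duality invocations rigorous. For Step 2 one must verify the (mild) regularity conditions in \cite{mohajerin2018data} --- upper semicontinuity and at-most-linear growth of $\ell_\tau$, both automatic for a finite maximum of affine functions over a \emph{compact} support --- and, more importantly, handle carefully the fact that the transportation cost is the \emph{anisotropic} norm $\|\pinv{\mathbf D_{t-1}}\cdot\|_2$ rather than a standard $\ell_p$ norm: this is exactly where the pseudoinverse identity for its dual norm enters and produces the distinctive SOCP constraint. The minimax exchange in Step 1 and the (polyhedral) Lagrangian duality with dual attainment in Step 3 are comparatively routine, once one has recorded that $z_t\oplus\mathcal E_t$ is compact and that $0\in\mathcal W$.
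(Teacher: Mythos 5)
Your proof is correct and follows essentially the same route as the paper's: the CVaR is linearized into the $(J{+}1)$-indexed piecewise-affine loss, the $\sup$--$\inf$ exchange is justified (the paper cites \cite[Theorem~2.6]{shafieezadeh2023new} where you invoke Sion directly), the inner worst-case expectation is dualized via Wasserstein DRO strong duality, and the anisotropic dual-norm identity $\|v\|_\star=\|\pinv{\mathbf{D}_{t-1}}((\pinv{\mathbf{D}_{t-1}})^\top\pinv{\mathbf{D}_{t-1}})^{-1}v\|_2$ plays the same role as the paper's Lemma~\ref{lemma:dual:norm}. The only difference is that you unpack by hand the per-sample Lagrangian dualization that the paper imports wholesale from \cite[Corollary~5.1(i)]{mohajerin2018data}, and you are slightly more careful about attainment of the infimum over $\tau$, which is needed to pass from ``$\inf\leq 0$'' to an existence constraint.
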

\begin{proof}
    See Section~\ref{proof:CVaR:reformulation}.
\end{proof}

Proposition~\ref{prop:DR:CVaR} guarantees that the following equivalence
\begin{align}
\label{eq:equivalence:CVaR:Gamma_t}
    \sup_{\Q \in \mathbb{S}_t} \text{CVaR}_{1-\gamma}^{\Q}\left(\max_{j \in [J]} a_j^\top x_t + b_j\right) \leq 0 \iff z_t \in \Gamma_t
\end{align}
holds. Interestingly, \eqref{eq:equivalence:CVaR:Gamma_t} reveals that the DR-CVaR constraint on the distributionally uncertain state $x_t$ can be equivalently reformulated as a set of deterministic constraints on the nominal state $z_t$. Moreover, the number of these constraints grows \emph{linearly} in the number of noise sample trajectories $n$.

In the rest of the paper, with a slight abuse of notation, we will write ${z} \in \Gamma_t$ to denote the fact that there exist $\tau \in \reals, \lambda \in \reals_+, s_i \in \reals, \zeta_{i j} \in \reals^{q_t}_+$, for $i \in [1:n]$ and $j \in [1:J+1]$, which satisfy the constraints in Proposition~\ref{prop:DR:CVaR} for $z_t = {z}$.

% --------------------------------------------------------------------

\subsection{MPC with DR-CVaR Constraints}
\label{subsec:MPC:safety}

We are now ready to formulate our Wasserstein Tube MPC (WT-MPC). We let $N \in \mathbb N$ denote the MPC horizon of interest, and we use the subscript $k | t$, for $k \in [0:N]$, to denote the (open-loop) predicted dynamics at time $k$ given the (closed-loop) time step $t$. Moreover, as required by engineering applications, we consider the \emph{robust} constraint
\begin{align*}
    u_t \in \mathcal U,
\end{align*}
 on the input, for all $t \in \mathbb N$. Then, the WT-MPC reads
\begin{alignat*}{3}
%\label{eq:WT-MPC}
    %J_k^\star := 
    \min \; &  \sum_{t=0}^{N-1} \left(\|z_{k | t}\|^2_Q + \|v_{k | t}\|^2_R\right) % + V_f(&&z_{N | k})   
    \\ 
    \st \; & c_{k | t}, v_{k | t} \in \reals^m, z_{k | t} \in \reals^d &&\;\;\;\;\;\;\forall k \in [0:N]  \\
    & z_{k+1 | t} = A z_{k | t} + B v_{k | t} &&\;\;\;\;\;\;\forall k \in [0:N-1] \\
    & v_{k | t} = K z_{k | t} + c_{k | t} &&\;\;\;\;\;\;\forall k \in [0:N-1] \\
    & v_{k | t} \in  \mathcal{U} \ominus K \mathcal E_k &&\;\;\;\;\;\;\forall k \in [0:N-1] \\
    &  z_{k | t} \in \mathcal Z_k &&\;\;\;\;\;\;\forall k \in [1:N-1] \\ 
    & z_{N | t} \in \mathcal Z_f \\ 
    & z_{0 | t} = x_t,%\; e_{0 | t} = 0,
\end{alignat*}
with
\begin{itemize}
    \item \emph{Initial condition $z_{0|t} = x_t$}. The open-loop nominal state $z_{k|t}$ is initialized at the (measured) closed-loop state value $x_t$ (leaving $e_{0 | t} = 0$). This guarantees that the open-loop dynamics are exactly as in \eqref{eq:stoch:dyn:sys:x}.
    \smallskip  
    \item \emph{Nominal constraint sets $\mathcal Z_k$}. The choice $\mathcal Z_k := \Gamma_k$ guarantees that the DR-CVaR constraint~\eqref{eq:CVaR:constraint} is satisfied by $x_{k|t}$ (recall the equivalence~\eqref{eq:equivalence:CVaR:Gamma_t}). In this case, we would like to highlight that the constraint $z_{k | t} \in \Gamma_k$ will be enforced through the set of constraints provided in Proposition~\ref{prop:DR:CVaR}. Consequently, the WT-MPC will inherit the decision variables $\tau \in \reals, \lambda \in \reals_+, s_i \in \reals, \zeta_{i j} \in \reals^{q_t}_+$, $\forall i \in [1:n], \forall j \in [1:J+1]$. However, this choice does not ensure recursive feasibility. In Section~\ref{subsec:recursive:feasibility} we show that this issue can be resolved through an appropriate constraint tightening, which results in a choice $\mathcal Z_k \subsetneq \Gamma_k$. 
    \smallskip  
    
    \item \emph{Terminal nominal set $\mathcal Z_f$}. In Section~\ref{subsec:recursive:feasibility} we explain how this set should be chosen to ensure recursive feasiblity.
     
    \smallskip 
    \item \emph{Input constraint sets $\mathcal{U} \ominus K \mathcal E_k$}. Since
    \begin{align*}
        u_k = K(z_k+e_k)+c_k = v_k + K e_k,
    \end{align*}
    and since $e_k$ is supported on $\mathcal E_k$, this choice guarantees that the input constraint $u_k \in \mathcal U$ is satisfied.
%    \item \emph{Terminal cost function $V_f(z_{T | k})$}. In Section~\ref{subsec:stability} we explain how this function should be chosen to ensure stability for the closed-loop system.
\end{itemize}

Denoting by $(c_{0 | t}^\star,\ldots,c_{N-1|t}^\star)$ the minimizer of the WT-MPC at time step $t$, then the optimal control input applied to system \eqref{eq:stoch:dyn:sys} is $u_t^\star = Kx_{t} + c_{0 | t}^\star$.

\begin{remark}
\label{remark:gamma:relaxation:X-E}
    Notice that the only difference between WT-MPC and RT-MPC stands in the choice of the nominal constraint sets $\mathcal Z_k$. Recall that in RT-MPC the sets $\mathcal Z_k$ are chosen as $\mathcal X \ominus \mathcal E_k$ \cite{mayne2005robust}. Since the DR-CVaR constraint~\eqref{eq:CVaR:constraint} relaxes the set $\mathcal X$, and since \eqref{eq:CVaR:constraint} is equivalent to $z_t \in \Gamma_t$, we have that $\mathcal X \ominus \mathcal E_k \subset \Gamma_k$. Therefore, if $\Gamma_k$ are chosen as nominal constraint sets, we have that WT-MPC reduces the conservatism of RT-MPC (at the expense of a pre-defined constraint violation probability).
\end{remark}

We would like to conclude this section by highlighting the fact that the proposed WT-MPC is a direct generalization of the RT-MPC to the stochastic setting. Indeed, through the choice of $\varepsilon$, the WT-MPC interpolates between the data-driven formulation based on sample average approximation, for $\varepsilon \to 0$, and the RT-MPC formulation, for $\varepsilon \to \text{diam}(\mathcal E_N)$. This follows immediately from Remark~\ref{remark:stoch:tubes}.

% To see this, first notice that the only difference between WT-MPC and RT-MPC stands in the choice of the nominal constraint sets $\mathcal Z_k$. In RT-MPC, the sets $\mathcal Z_k$ are chosen as $\mathcal X \ominus \mathcal E_k$. 

% --------------------------------------------------------------------
% --------------------------------------------------------------------
% --------------------------------------------------------------------

%\section{Closed-Loop Properties}
\label{sec:closed-loop:properties}

%In this section we analyze the closed-loop properties of the WT-MPC problem. We start by explaining how to choose the nominal constraint sets $\mathcal Z_k$ to guarantee both that the DR-CVaR constraint~\eqref{eq:CVaR:constraint} is satisfied and that the policy generated by the WT-MPC is recursively feasible. 
%Then, we explain how to choose the terminal cost function $V_f(z_{T | k})$ to guarantee the stability of the closed-loop system.

% --------------------------------------------------------------------

\section{Recursive Feasibility}
\label{subsec:recursive:feasibility}

Since the WT-MPC problem is solved in a receding horizon fashion, it is fundamental to ensure that if it is initially feasible, then it remains feasible at all future time steps. This property is commonly known as \textit{recursive feasibility}. 

As pointed out in the previous section, the choice $\mathcal Z_k = \Gamma_k$, with $\Gamma_k$ defined in Proposition~\ref{prop:DR:CVaR}, does not guarantee recursive feasibility. To see this, recall from \eqref{eq:equivalence:CVaR:Gamma_t} that $z_{k|t} \in \Gamma_k$ is equivalent to $x_{k|t}$ satisfying the DR-CVaR constraint \eqref{eq:CVaR:constraint}. The latter, in turn, guarantees that $x_{k|t} \in \mathcal X$ only with high probability, i.e., for almost (\emph{but not}) all possible noise realizations. Now, since the WT-MPC problem is initialized at the closed-loop state $x_t$, there is a low, but \emph{strictly positive} probability of infeasibility. Notice that this issue does not appear in RT-MPC, which robustifies against all possible noise realizations by imposing the constraint $z_{k|t} \in  \mathcal X \ominus \mathcal E_k$. Such conservative choice, which considers the support of the noise, automatically guarantees recursive feasibility.

In what follows, we will show that an appropriate constraint tightening resolves this issue. We start by defining the nominal constraint sets as
\begin{align}
\label{eq:Z_t}
    \mathcal Z_k := \bigcap_{p=1}^k \left( \Gamma_p \ominus \left(\bigoplus_{r=p}^{k-1} A_K^r \mathcal W \right)\right), \quad \forall k \in [1:N].
\end{align}
By construction, we have that $\mathcal Z_k \subset \Gamma_k$. Such constraint tightening is consistent with the stochastic MPC literature. Specifically, \eqref{eq:Z_t} is closely related to what is done in \cite{kouvaritakis2010explicit} to guarantee recursive feasibility.

\begin{remark}
Using the fact that $\mathcal X \ominus \mathcal E_k$, it can be easily derived that $\mathcal X \ominus \mathcal E_k \subset \mathcal Z_k$. This shows that the constraint tightening \eqref{eq:Z_t}, which guarantees recursive feasibility (see Proposition~\ref{prop:recursive:feasibility}), results in a WT-MPC problem which is generally less conservative than RT-MPC.
\end{remark}

\begin{remark}
The nominal constraints $z_{k |t} \in \mathcal Z_k$ can be easily enforced in the WT-MPC problem. This follows from the characterization of $\Gamma_p$ in Proposition~\ref{prop:DR:CVaR} and from \cite[Theorem~2.2]{kolmanovsky1995maximal}, as explained next. First, we consider the constraint $z_{k |t} \in \Gamma_p$, and notice that it is equivalent to the set of affine constraints
\begin{align}
\label{eq:set:gamma_i:z}
    \alpha_j^\top (z_{k | t} + \widehat{e}_p^{(i)}) + \beta_j(\tau)  + \zeta_{ij}^\top \left(g_p - F_p \widehat{e}_p^{(i)}\right) \leq s_i,
\end{align}
for all $i \in [1:n]$ and $j \in [1:J+1]$, expressed in terms of the auxiliary decision variables $\tau \in \reals, \lambda \in \reals_+, s_i \in \reals, \zeta_{i j} \in \reals^{q_t}_+$. Secondly, since the set $\bigoplus_{r=p}^{k-1} A_K^r \mathcal W$ is polyhedral, we know from \cite[Theorem~2.2]{kolmanovsky1995maximal} that the constraint $z_{k | t} \in \Gamma_p \ominus \left(\bigoplus_{r=p}^{k-1} A_K^r \mathcal W \right)$ is equivalent to the following tightened version of the affine constraints \eqref{eq:set:gamma_i:z},
\begin{align*}
    \alpha_j^\top (z_{k | t} + \widehat{e}_p^{(i)}) + \beta_j(\tau)  + \zeta_{ij}^\top \left(g_p - F_p \widehat{e}_p^{(i)}\right) \leq s_i - h_p(\alpha_j),
\end{align*}
with 
\begin{align*}
    h_p(\alpha_j):= \sup_{\xi \in \bigoplus_{r=p}^{k-1} A_K^r \mathcal W} \alpha_j^\top \xi.
\end{align*}
Notice that the values $h_p(\alpha_j)$ can be computed offline. Finally, the sets $\mathcal Z_k$ in \eqref{eq:Z_t} are defined as the intersection of the sets $\Gamma_p \ominus \left(\bigoplus_{r=p}^{k-1} A_K^r \mathcal W \right)$, and can be easily constructed using the tightened affine constraints introduced above.
\end{remark}

Moreover, for recursive feasibility, we need the following standard assumption on the terminal nominal set.

\begin{assumption}
\label{assump:Z_k:V}
There exists a terminal set $\mathcal Z_f$ satisfying: 
\begin{itemize}
    \item[(i)] $K\mathcal Z_f \subseteq \mathcal{U} \ominus K \mathcal E_{N}$.

    \item[(ii)] $A_K \mathcal Z_f \oplus A_K^N \mathcal W \subseteq \mathcal Z_f$.

    \item[(iii)] $\mathcal Z_f \subseteq \mathcal Z_N$.
\end{itemize}
\end{assumption}

Conditions~(i)-(iii) in Assumption~\ref{assump:Z_k:V} are naturally inherited from robust MPC~\cite{mayne2005robust}, and they guarantee that $\mathcal Z_f$ is forward invariant at time $N$, robustly with respect to the initial condition, for zero control input. Notice that the existence of a set $\mathcal Z_f$ that satisfies condition~(ii) is generally guaranteed, for an appropriately long horizon $N$, by the fact that $A_K$ is Schur stable.

\begin{proposition}
\label{prop:recursive:feasibility}
Let Assumption~\ref{assump:noise} hold, let $\mathcal Z_k$ be defined as in \eqref{eq:Z_t}, and let $\mathcal Z_f$ satisfy Assumption~\ref{assump:Z_k:V}. Then, WT-MPC is recursively feasible. %Then, the policy generated by the WT-MPC is recursively feasible.
\end{proposition}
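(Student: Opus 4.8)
The plan is to follow the standard Mayne-style recursive feasibility argument for tube MPC, adapted to the tightened constraint sets $\mathcal{Z}_k$ in \eqref{eq:Z_t}. Suppose the WT-MPC problem is feasible at time $t$, with minimizer $(c_{0|t}^\star,\ldots,c_{N-1|t}^\star)$ and associated nominal trajectory $(z_{0|t}^\star,\ldots,z_{N|t}^\star)$, $z_{0|t}^\star = x_t$. I would construct an explicit feasible (not necessarily optimal) candidate solution at time $t+1$ by the usual shift-and-append: set $c_{k|t+1} := c_{k+1|t}^\star$ for $k \in [0:N-2]$, and $c_{N-1|t+1} := 0$ (the terminal controller $u = Kz$). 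The key discrepancy to track is that the candidate nominal state at $t+1$ is initialized at the realized closed-loop state $x_{t+1} = A_K x_t + B c_{0|t}^\star + w_t = z_{1|t}^\star + w_t$, which differs from $z_{1|t}^\star$ by the realized noise $w_t \in \mathcal{W}$. Propagating this mismatch forward under $A_K$, the candidate nominal states satisfy $z_{k|t+1} = z_{k+1|t}^\star + A_K^k w_t$ for $k \in [0:N-1]$, and $z_{N|t+1} = A_K z_{N|t}^\star + A_K^N w_t$.

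The core of the argument is then to verify each constraint of the WT-MPC problem for this candidate. For the state constraints $z_{k|t+1} \in \mathcal{Z}_k$, $k \in [1:N-1]$: by optimality at time $t$ we have $z_{k+1|t}^\star \in \mathcal{Z}_{k+1} \subseteq \Gamma_p \ominus \bigl(\bigoplus_{r=p}^{k} A_K^r \mathcal{W}\bigr)$ for every $p \in [1:k+1]$, and since $A_K^k w_t \in A_K^k \mathcal{W} \subseteq \bigoplus_{r=p}^{k} A_K^r \mathcal{W}$ for $p \leq k$, the Pontryagin-difference definition gives $z_{k+1|t}^\star + A_K^k w_t \in \Gamma_p \ominus \bigl(\bigoplus_{r=p}^{k-1} A_K^r \mathcal{W}\bigr)$ for all $p \in [1:k]$ — which is exactly the defining intersection for $\mathcal{Z}_k$. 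This telescoping of the nested Minkowski sums in \eqref{eq:Z_t} is the main technical point and the step I expect to require the most care in bookkeeping of indices. For the input constraints $v_{k|t+1} \in \mathcal{U} \ominus K\mathcal{E}_k$, a parallel computation uses $v_{k|t+1} = v_{k+1|t}^\star + K A_K^k w_t$ together with $A_K^k w_t \in A_K^k \mathcal{W}$ and the recursion $\mathcal{E}_{k+1} = A_K \mathcal{E}_k \oplus \mathcal{W} \supseteq \mathcal{E}_k \oplus A_K^k \mathcal{W}$ (so membership in $\mathcal{U} \ominus K\mathcal{E}_{k+1}$ at stage $k+1$ implies the shifted membership at stage $k$); for the appended final stage $k = N-1$ one instead invokes Assumption~\ref{assump:Z_k:V}(i), $K\mathcal{Z}_f \subseteq \mathcal{U} \ominus K\mathcal{E}_N$, after establishing $z_{N-1|t+1} \in \mathcal{Z}_f$ below.

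Finally, for the terminal constraint I would show $z_{N|t+1} \in \mathcal{Z}_f$: from $z_{N|t}^\star \in \mathcal{Z}_f$ and Assumption~\ref{assump:Z_k:V}(ii), $A_K z_{N|t}^\star \oplus A_K^N \mathcal{W} \subseteq \mathcal{Z}_f$, hence $z_{N|t+1} = A_K z_{N|t}^\star + A_K^N w_t \in \mathcal{Z}_f$; and to close the chain at stage $N-1$ one uses Assumption~\ref{assump:Z_k:V}(iii), $\mathcal{Z}_f \subseteq \mathcal{Z}_N$, combined with feasibility of $z_{N|t}^\star$ — though more directly, $z_{N-1|t+1} = z_{N|t}^\star + A_K^{N-1} w_t$, and since $z_{N|t}^\star \in \mathcal{Z}_f \subseteq \mathcal{Z}_N \subseteq \Gamma_p \ominus \bigl(\bigoplus_{r=p}^{N-1} A_K^r \mathcal{W}\bigr)$ one again gets $z_{N-1|t+1} \in \mathcal{Z}_{N-1}$ by the same Pontryagin-difference step. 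The realized closed-loop input $u_t^\star = K x_t + c_{0|t}^\star = v_{0|t}^\star \in \mathcal{U} \ominus K\mathcal{E}_0 = \mathcal{U}$ (since $\mathcal{E}_0 = \{0\}$), and the remaining trivial constraints ($z_{0|t+1} = x_{t+1}$, dynamics, $v = Kz + c$) hold by construction. Since the WT-MPC constraints also carry the auxiliary DRO variables $(\tau,\lambda,s_i,\zeta_{ij})$ attached to each $z_{k|t+1} \in \mathcal{Z}_k$, I would note that these can be chosen feasibly precisely because membership in $\mathcal{Z}_k$ (equivalently, in each tightened $\Gamma_p$) is, via Proposition~\ref{prop:DR:CVaR} and the tightened affine form in Remark~\ref{remark:gamma:relaxation:X-E}'s surrounding discussion, by definition the statement that such variables exist. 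This exhibits a feasible point at $t+1$, completing the induction.
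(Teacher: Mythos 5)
Your proposal is correct and follows essentially the same route as the paper's proof: the shifted-and-appended candidate $(c_{1|t}^\star,\ldots,c_{N-1|t}^\star,0)$, the propagated mismatch $z_{k|t+1}=z_{k+1|t}^\star+A_K^k w_t$, the containment $\mathcal Z_{k+1}\oplus A_K^k\mathcal W\subseteq\mathcal Z_k$ (which you verify by unpacking the nested Pontryagin differences in \eqref{eq:Z_t}, where the paper simply cites \cite[Theorem~2.1]{kolmanovsky1995maximal}), and the terminal-set conditions for the last stage. One spot is imprecise: for the appended input $v_{N-1|t+1}$ you say you would invoke Assumption~\ref{assump:Z_k:V}(i) ``after establishing $z_{N-1|t+1}\in\mathcal Z_f$,'' but you never establish that membership, and it does not hold in general ($\mathcal Z_f$ is only invariant under $A_K(\cdot)\oplus A_K^N\mathcal W$, not under adding $A_K^{N-1}w_t$). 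The correct step — the one the paper uses, and which your own machinery already supports — is to write $v_{N-1|t+1}=Kz^\star_{N|t}+KA_K^{N-1}w_t$, note $Kz^\star_{N|t}\in K\mathcal Z_f\subseteq\mathcal U\ominus K\mathcal E_N$, and conclude via $(\mathcal U\ominus K\mathcal E_N)\oplus KA_K^{N-1}\mathcal W\subseteq\mathcal U\ominus K\mathcal E_{N-1}$, which follows from $\mathcal E_N=\mathcal E_{N-1}\oplus A_K^{N-1}\mathcal W$. With that local fix the argument is complete.
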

\begin{proof}
See Section~\ref{proof:recursive:feasibility}.
\end{proof}

In addition to recursive feasibility, whenever the ambiguity set $\ball{\varepsilon}{\|\cdot\|_2}{\empiricalP{[0]}}$ contains the true distribution $\P$ of the noise, Proposition~\ref{prop:recursive:feasibility} guarantees that the closed-loop system satisfies $x_t \in \mathcal X$, with probability $1-\gamma$, $\forall t \in \mathbb N$. This follows immediately from the equivalence~\eqref{eq:equivalence:CVaR:Gamma_t} and the fact that the the DR-CVaR constraint~\eqref{eq:CVaR:constraint} implies the DR-CC~\eqref{eq:CC:constraint}.

% --------------------------------------------------------------------

%{\color{blue}\subsection{Stability}
%\label{subsec:stability}

%The terminal value function $V_f(z_{T | k})$ is chosen as $\|z_{T | k}\|_P^2$, for a matrix $P$ that satisfies
%\begin{align}
%\label{eq:terminal:P}
%    (A+BK)^\top P (A+BK) - P + Q + K^\top R K \preceq 0.
%\end{align}}

% --------------------------------------------------------------------
% --------------------------------------------------------------------
% --------------------------------------------------------------------

\section{Numerical Experiments}
\label{sec:numerical}

\begin{figure}[t]
\centering
\includegraphics[width=\linewidth]{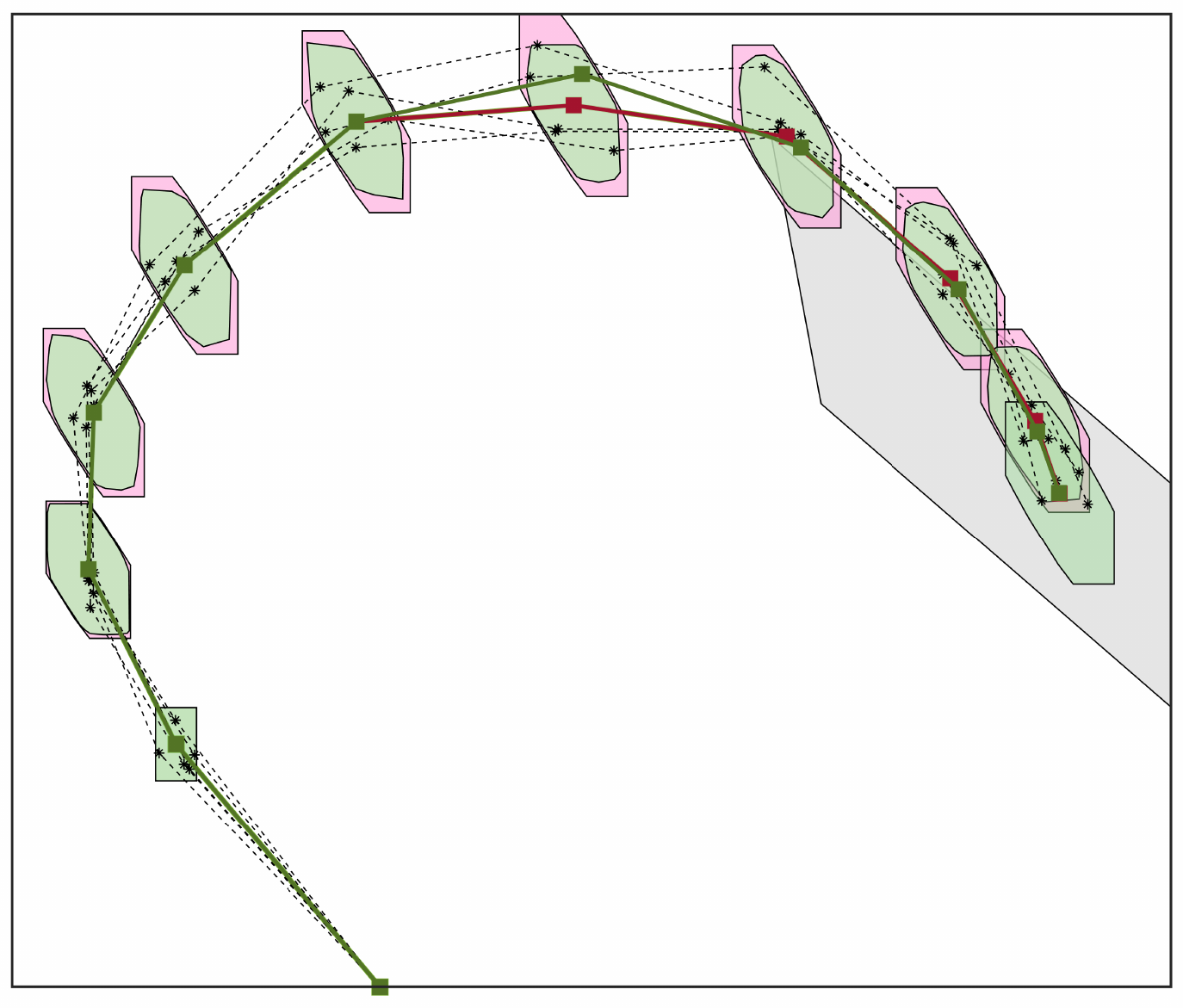}
\caption{Tubes evolution along the prediction horizon. The red solid line denotes the nominal state trajectory under the RT-MPC policy, and the green solid line denotes the one from WT-MPC policy. Black dashed lines represent random realizations of the error trajectory. The pink areas denote the robust tube $\{\mathcal E_k\}_{k=0}^N$ and the light green areas denote the tube $\{\mathcal X - \Gamma_k\}_{k=0}^N$ constructed numerically using Proposition~\ref{prop:DR:CVaR}. For ease of comparison, both tubes are centered around the nominal trajectory from the RT-MPC policy. The grey area represents the terminal invariant set.}
\label{fig:open:loop:behaviour}
\end{figure}

To numerically validate the proposed WT-MPC, we consider the following system
\begin{align*}
    x_{k+1} = \begin{bmatrix} 1 & 1\\ 0 & 1 \end{bmatrix}x_{k} + \begin{bmatrix} 0.5 \\ 1 \end{bmatrix}u_{k} + w_k,
\end{align*} 
borrowed from~\cite{mayne2005robust}, with initial condition $x_0 = [-5,-2]^\top$. We let the support of the uncertainty $\mathcal W$ be the box $[-0.15, 0.15]\otimes[-0.15, 0.15]$, and we consider a noise which is uniformly distributed over $\mathcal W$, i.e., $w_t \sim \mathcal{U}\left(\mathcal W\right)$, $\forall t \in \mathbb N$. Moreover, we assume to have access to a dataset consisting of $n$ disturbance trajectories of length $N=10$, for $n \in \{10,20,50\}$.

For the WT-MPC problem, we consider the stage cost $\|z_{k | t}\|_Q^2 + \|v_{k | t}\|_R^2$ with $Q=\begin{bsmallmatrix} 1 & 0 \\ 0 & 1\end{bsmallmatrix}$, and $R = 0.1$. Moreover, we consider the state constraint set
\begin{align*}
\mathcal X = \{x \in &\reals^2:\, \max \{ \begin{bmatrix}1 & 0 \end{bmatrix} x -2 , \begin{bmatrix}-1 & 0 \end{bmatrix} x - 10, \\   &\begin{bmatrix}0 & 1 \end{bmatrix} x - 2, \begin{bmatrix}0 & -1 \end{bmatrix} x - 2 \} \leq 0 \},
\end{align*}
and the input constraint set $\mathcal{U} = \{u \in \mathbb{R} : -1 \leq u \leq 1\}$. Finally, we set $\gamma = 0.2$ in the DR-CVaR constraint, we choose an ambiguity radius $\varepsilon \in\{0, 0.01, 0.1, 1\}$, and we let $\mathcal Z_k = \Gamma_k$, for all $k \in [1:N]$. We compare:
\begin{itemize}
\item RT-MPC, which robustifies against all possible noise realizations within the support set $\mathcal W$.
\item WT-MPC, which exploits the availability of data (the $n$ noise trajectories), and robustifies against distributional uncertainty by tuning the ambiguity radius $\varepsilon$.
\end{itemize}

\subsection{Open-Loop Analysis}
We first compare the two methods in open-loop. Fig.~\ref{fig:open:loop:behaviour} compares the open-loop behaviour of the system under the RT-MPC policy and the WT-MPC policy. Through the DR-CVaR constraint \eqref{eq:CVaR:constraint}, WT-MPC relaxes the robust constraints into probabilistic ones, allowing a fraction of the state error trajectories $\{e_k\}_{k=0}^N$ to result in a user-defined probability of violating the constraint $x_k \in \mathcal X$. This is equivalent to considering the tube $\{\mathcal X - \Gamma_k\}_{k=0}^N$, which is visibly smaller than to the robust tube $\{\mathcal E_k\}_{k=0}^N$.

In the second open-loop experiment, we compute the empirical probability of violating the constraint $x_t \in \mathcal X$, $\forall t \in [1:N]$, for 10000 noise trajectory realizations. For each parameter configuration $(n,\varepsilon)$ we repeat the procedure 500 times, each time considering a different realization of the center distribution $\empiricalP{[t-1]}$. Fig.~\ref{fig:sensitivity} shows a parametric study of the WT-MPC policy by tuning the ambiguity radius $\varepsilon$ (for fixed $n=20$) and the number of noise sample trajectories $n$ (for fixed $\varepsilon=0.01$). We see that the constraint satisfaction increases with larger radii (i.e., more distributional robustness) and larger sample sizes (i.e., more knowledge about the true noise distribution). However, we observe that $\varepsilon$ has a higher influence on the frequency of violations compared to $n$. This is primarily due to the fact that we use a very limited amount of noise sample trajectories: only $n \in \{10,20,50\}$ from the distribution of $\mathbf{w}_{[N-1]}$, which lives in dimension $Nd = 20$. Using few samples can be highly desirable in real-time applications, since a higher $n$ translates to more constraints in the WT-MPC (see Proposition~\ref{prop:DR:CVaR}), and therefore a higher computational complexity. In that case, Fig.~\ref{fig:sensitivity} shows that this can be done without sacrificing robustness, by simply picking a higher radius. Importantly, increasing $\varepsilon$ comes at no added computational complexity of the WT-MPC problem. 

\begin{figure}
\centering
\includegraphics[width=\linewidth]{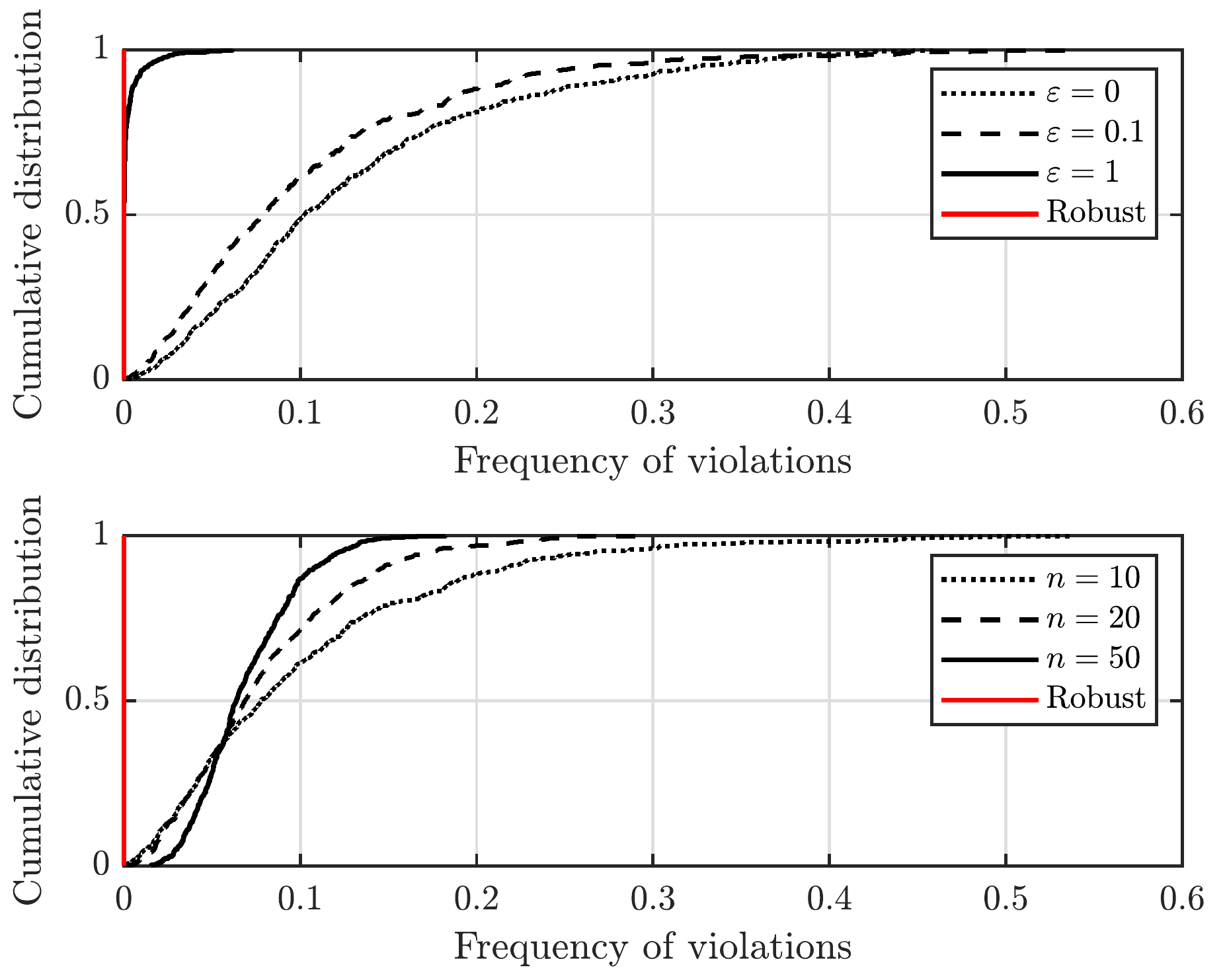}
\caption{Effect of Wasserstein radius (top panel) and number of samples (bottom panel) on the empirical frequency of violation during the open-loop experiments. The case $\varepsilon=0.01$ is almost-identical to $\varepsilon=1$, hence it is not reported to ease the plot readability.}
\label{fig:sensitivity}
\end{figure}

\subsection{Closed-Loop Analysis}

Next, we compare the performance of the two methods in closed-loop for a control task of length $T =15$. Again, we consider $n \in \{10,20,50\}$ (for fixed $\varepsilon=0.01$) and $\varepsilon \in \{0, 0.01, 0.1, 1\}$ (for fixed $n=20$), and for each parameter configuration we repeat the procedure 100 times, each time considering differents realization of the noise. 
The out-of-sample performance in terms of both the empirical probability of violating the constraint $x_t \in \mathcal X$, $\forall t \in [1:T]$, and the closed-loop cost are reported in Fig.~\ref{fig:closedloop}. 

Fig.~\ref{fig:closedloop} strengthens the observation that we have made in the open-loop analysis: even in the presence of a small number of samples $n$, we can ensure 
\begin{itemize}
    \item a desired robustness level for the closed-loop system,

    \item smaller closed-loop cost (i.e., increased performance) compared to RT-MPC, and

    \item good computational complexity,
\end{itemize}
by simply adjusting the value of $\varepsilon$. Moreover, in the bottom plot of Fig.~\ref{fig:closedloop}, we observe that WT-MPC returns a policy which \emph{optimally} trades between safety (i.e., constraint violation) and performance (i.e., closed-loop cost).

\begin{figure}
\centering
\includegraphics[width=\linewidth]{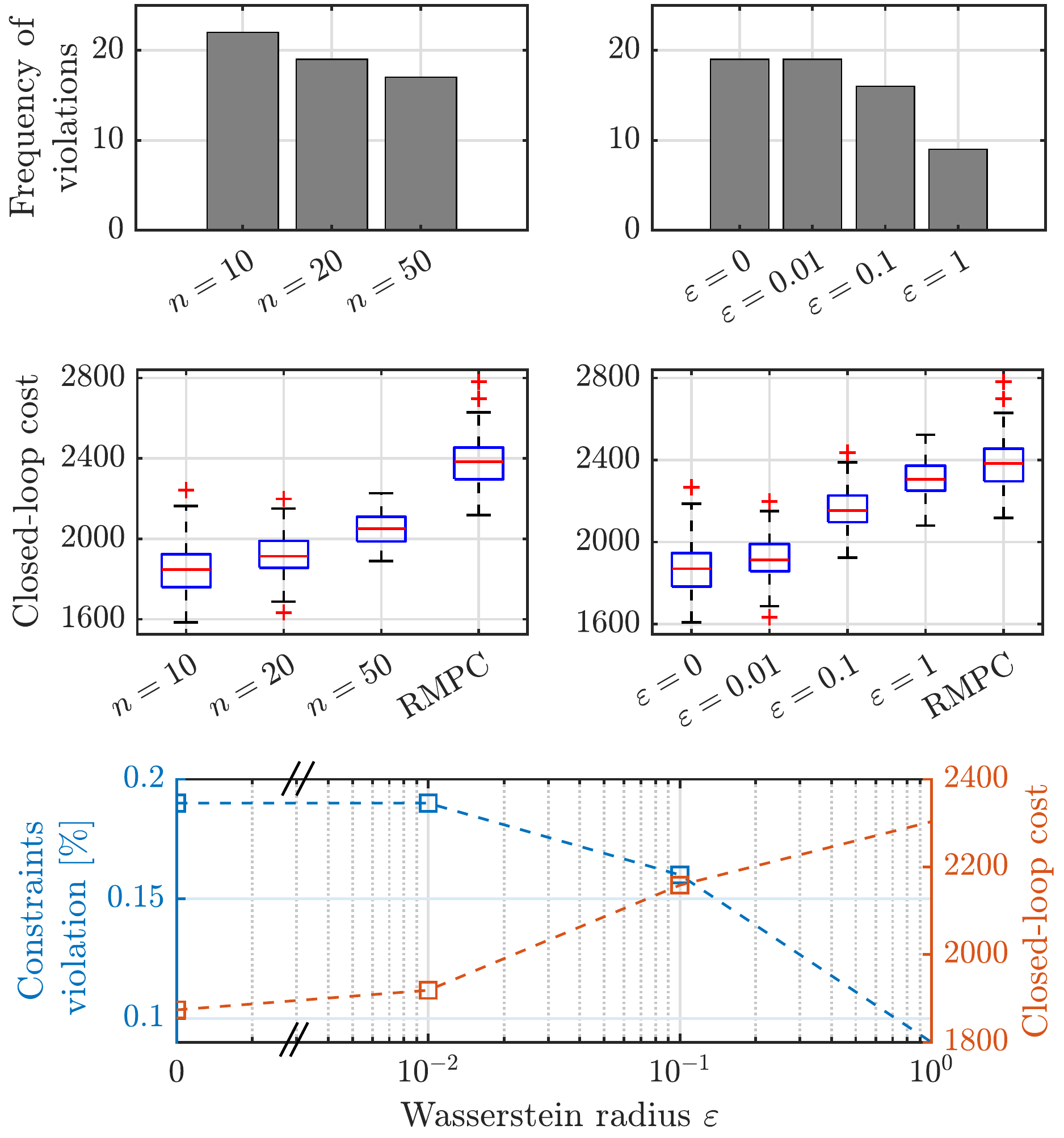}
\caption{Closed-loop analysis. Effect of sample size (top-left panel) and Wasserstein radius (top-right panel) on the frequency of violation during the closed-loop experiments. The robust formulation returns 0 violations in both cases. Closed-loop cost sensitivity to the two tuning knobs is reported in middle-left panel and middle-right panel, respectively. The bottom plot reports the trade-off between safety and performance as a function of Wasserstein radius. }
\label{fig:closedloop}
\end{figure}

\section{Current and Future Work}

%We presented a novel Wasserstein Tube MPC scheme, and we have shown (both theoretically and numerically) that the proposed method alleviates some of the conservatism of purely robust formulations. The proposed scheme only relies on historical data of the noise, and is able to exactly capture the propagation of distributional uncertainty along the prediction horizon.

Current work focuses on proving the stability properties of the closed-loop system resulting from the WT-MPC policy, and extending the WT-MPC to the case where the stochastic noise has unbounded support. In particular, for the former, we are currently working towards establishing a general framework to prove stability of systems affected by distributional uncertainty.

Future work includes extensions to state feedback MPC (optimizing both over $K$ and $c_t$) and to model uncertainty (see \cite{chen2022robust} for a recent overview of such extensions).

\bibliographystyle{unsrt}
\bibliography{references}

\section*{Appendix}

\subsection{Proof of Proposition~\ref{prop:ambiguity:set:state}}
\label{appendix:amb:set:x}

The proof builds upon \cite[Theorem~2]{aolaritei2023capture} and \cite[Corollary~3.16]{aolaritei2022uncertainty}. We will prove the result in two steps. First, we study the propagation of the distributional uncertainty from the noise $\mathbf{w}_{[t-1]}$ to the error state $e_t$. Secondly, since $z_t$ is deterministic, the distributional uncertainty of $x_t$ follows from a simple convolution with $\delta_{z_t}$ (see Definition~\ref{def:convolution}).
\smallskip

\textbf{Step~1}. Since $\|\cdot\|_2$ is orthomonotone, i.e., $\|x_1+x_2\|_2 \geq \|x_1\|_2$ for all $x_1,x_2 \in \reals^d$ satisfying $\T{x_1}x_2 = 0$, and since $\mathbf{D}_{t-1}$ is full row-rank, \cite[Theorem~2]{aolaritei2023capture}  (see also the subsequent remark) guarantees that
\begin{align*}
    \pushforward{\mathbf{D}_{t-1}}{\ball{\varepsilon}{\|\cdot\|_2}{\empiricalP{[t-1]}}} 
    = 
    \ball{\varepsilon}{\|\cdot\|_2\circ \pinv{\mathbf{D}_{t-1}}}{\pushforward{\mathbf{D}_{t-1}}{\empiricalP{[t-1]}}},
\end{align*}
where the ambiguity set on the right-hand side is supported on $\mathcal E_t = \mathbf{D}_{t-1} \mathcal W^t$. Now, similarly to Example~\ref{ex:pushforwad:empirical}, we have that $\pushforward{\mathbf{D}_{t-1}}{\empiricalP{[t-1]}}$ is precisely the empirical distribution $\widehat{\P}_{{e}_t}:=\frac{1}{n}\sum_{i=1}^n \delta_{\widehat{e}_t^{(i)}}$, with $\widehat{e}_t^{(i)} := \mathbf{D}_{t-1} \widehat{\mathbf{w}}_{[t-1]}^{(i)}$.
\smallskip

\textbf{Step~2}. Since $z_t$ is deterministic, recall from Definition~\ref{def:convolution} that the distributional uncertainty of $x_t$ is exactly captured by the convolution 
\begin{align*}
    \delta_{z_t} \ast \ball{\varepsilon}{\|\cdot\|_2\circ \pinv{\mathbf{D}_{t-1}}}{\widehat{\P}_{{e}_t}}.
\end{align*}
Finally, using \cite[Corollary~3.16]{aolaritei2022uncertainty}, this is equal to the ambiguity set $\mathbb{S}_t$. Indeed, this follows immediately, by noticing that the translation $\xi \to z_t + \xi$ from $\mathcal E_t$ to $z_t \oplus \mathcal E_t$ is a bijective transformation. This concludes the proof.

\subsection{Proof of Proposition~\ref{prop:DR:CVaR}}
\label{proof:CVaR:reformulation}

Using the CVaR definition~\eqref{eq:CVaR}, and defining $\alpha_j,\beta_j$, for $j \in [J+1]$, as in the statement of Proposition~\ref{prop:DR:CVaR}, the left hand-side in the constraint~\eqref{eq:CVaR:constraint} can be rewritten as
\begin{align}
\label{prop:DR:CVaR:1}
    \sup_{\Q \in \mathbb{S}_t} \inf_{\tau \in \reals}\; \mathbb E_{\mathbb Q} \left[ \max_{j \in [J+1]} \alpha_j^\top x_t + \beta_j(\tau) \right].
\end{align}
Since the support set~\eqref{eq:support:amb:set} of $\mathbb{S}_t$ is compact, and since $\|\cdot\|_2\circ \pinv{\mathbf{D}_{t-1}}$ satisfies all the axioms of a norm on $\reals^d$ (which follows from Lemma~\ref{lemma:dual:norm}, using the fact that $\pinv{\mathbf{D}_{t-1}}$ is full-column rank), it can be easily checked that all the assumptions of \cite[Theorem~2.6]{shafieezadeh2023new} hold, and \eqref{prop:DR:CVaR:1} is equivalent to
\begin{align}
\label{prop:DR:CVaR:2}
    \inf_{\tau \in \reals} \sup_{\Q \in \mathbb{S}_t}\; \mathbb E_{\mathbb Q} \left[ \max_{j \in [J+1]} \alpha_j^\top x_t + \beta_j(\tau) \right].
\end{align}

Moreover, from \cite[Corollary 5.1(i)]{mohajerin2018data} or \cite[Proposition~2.12]{shafieezadeh2023new}, we know that the inner supremum in \eqref{prop:DR:CVaR:2} has the following strong dual
\begin{align*}
    \begin{array}{cl}
        \inf & \DS \lambda \varepsilon + \frac{1}{N}\sum_{i=1}^N s_i \\
        \st & \DS \forall i \in [1:N],\; \forall j \in [1:J+1]:\\
        &\begin{cases}
            \DS \lambda \in \reals_+, s_i \in \reals, \zeta_{i j} \in \reals^{q_t}_+ & \\
            \DS \alpha_j^\top (z_{t} + \widehat{e}_t^{(i)}) + \beta_j(\tau)  + \zeta_{ij}^\top \left(g_t - F_t \widehat{e}_t^{(i)}\right) \leq s_i & \\
            \DS \left\| \pinv{\mathbf{D}_{t-1}} \left( (\pinv{\mathbf{D}_{t-1}})^\top \pinv{\mathbf{D}_{t-1}} \right)^{-1} \left(F_t^\top \zeta_{ij} - \alpha_j \right) \right\|_2 \leq \lambda, & 
        \end{cases}
    \end{array}
\end{align*}
where we have used the fact that 
$$\left\| \pinv{\mathbf{D}_{t-1}} \left( (\pinv{\mathbf{D}_{t-1}})^\top \pinv{\mathbf{D}_{t-1}} \right)^{-1} \,\cdot\, \right\|_2$$
is the dual norm of $\| \pinv{\mathbf{D}_{t-1}}\, \cdot\, \|_2 = \|\cdot\|_2\circ \pinv{\mathbf{D}_{t-1}}$ (see Lemma~\ref{lemma:dual:norm}).

Finally, the result in Proposition~\ref{prop:DR:CVaR} follows by noticing that $\inf \cdot \leq 0$ constraints are equivalent to existence constraints.

\subsection{Proof of Proposition~\ref{prop:recursive:feasibility}}
\label{proof:recursive:feasibility}

Let $(c_{0 | t}^\star,\ldots,c_{N-1|t}^\star)$ denote the optimal policy given by the WT-MPC at time step $t$. Moreover, let $(v_{0 | t}^\star,\ldots,v_{N-1|t}^\star)$ and $(z_{1 | t}^\star,\ldots,z_{N|t}^\star)$ denote the corresponding optimal input and nominal states. For recursive feasibility, we will prove that the policy  at time step $t+1$,
\begin{align*}
    (c_{0 | t+1},\ldots,c_{N-1|t+1}) := (c_{1 | t}^\star,\ldots,c_{N-1|t}^\star, 0),
\end{align*}
is feasible. Specifically, we will show that the corresponding $(v_{0 | t+1},\ldots,v_{N-1|t+1})$ and $(z_{1 | t+1},\ldots,z_{N|t+1})$ satisfy the constraints of the WT-MPC problem. 

We start with the nominal states $(z_{1 | t+1},\ldots,z_{N|t+1})$. Since the nominal state initialization at time step $t+1$ is $z_{0 | t+1} = x_{t+1} = z^\star_{1 | t} + w_t$, the following holds for the predicted nominal system,
\begin{align*}
z_{1 | t+1} = A_K(z^\star_{1 | t} + w_t) + B c^\star_{1| t} = z^\star_{2| t} + A_K w_t,
\end{align*}
and, by induction, we obtain
\begin{align}
\label{eq:z_k:z_k+1star}
    z_{k| t+1} = z^\star_{k+1| t} + A_K^k w_t,\quad \forall k \in [1:N-1].
\end{align}
Now, based on the construction of the tightened sets in \eqref{eq:Z_t}, and using \cite[Theorem~2.1]{kolmanovsky1995maximal}, we have that
\begin{align*}
    \mathcal Z_{k+1} \oplus A_K^k \mathcal W \subseteq \mathcal Z_{k},\quad \forall k \in [1:N-1].
\end{align*}
Therefore, the feasibility of the first $N-1$ nominal states follows immediately from the implications
\begin{align*}
    z^\star_{k+1 | t} \in \mathcal Z_{k+1} &\implies z_{k | t+1} \in \mathcal Z_{k},\quad \forall k = 1,\ldots, N-2\\
    z^\star_{N | t} \in \mathcal Z_{f} &\implies z_{N-1 | t+1} \in \mathcal Z_{N-1},
\end{align*}
where the second implication uses the fact that $\mathcal Z_f \subseteq \mathcal Z_N$. 

Finally, the feasibility of $z_{N | t+1}$ follows directly from
\begin{align*}
z_{N | t+1} = A_K z_{N-1 | t+1} &= A_K(z^\star_{N | t} + A_K^{N-1} w_t) \\ &= A_K z^\star_{N | t} + A_K^N w_t
\end{align*}
and the two facts $z^\star_{N | t} \in \mathcal Z_f$ and $A_K \mathcal Z_f \oplus A_K^N \mathcal W \subseteq \mathcal Z_f$, which show that $z_{N | t+1} \in \mathcal Z_f$. 

Consider now the input sequence $(v_{0 | t+1},\ldots,v_{N-1|t+1})$. Again, since $z_{0 | t+1} = z^\star_{1 | t} + w_t$, we have that
\begin{align*}
v_{0 | t+1} &= K(z^\star_{1 | t} + w_t) + c^\star_{1| t} = v^\star_{1 | t} + K w_t,
\end{align*}
and more generally, using \eqref{eq:z_k:z_k+1star}, we have
\begin{align*}
    v_{k| t+1} = v^\star_{k+1| t} + K A_K^k w_t,\quad \forall k \in [0:N-2].
\end{align*}
Now, since $v^\star_{k+1| t} \in \mathcal U - K \mathcal E_{k+1}$, and since
\begin{align*}
    (\mathcal U \ominus K \mathcal E_{k+1}) \oplus K A_K^k \mathcal W \subseteq \mathcal U \ominus K \mathcal E_{k}
\end{align*}
(by \cite[Theorem~2.1]{kolmanovsky1995maximal}), we have that $v_{k| t+1} \in \mathcal U \ominus K \mathcal E_{k}$ for all $k \in [0:N-2]$. This shows that $(v_{0 | t+1},\ldots,v_{N-2|t+1})$ satisfy the input constraints at time $t+1$.

Finally, the feasibility of $v_{N-1 | t+1}$ follows from the assumption $K\mathcal Z_f \subseteq \mathcal{U} \ominus K \mathcal E_{N}$, as explained next. Since
\begin{align*}
    v_{N-1 | t+1} = K z_{N-1|t+1} = Kz^\star_{N | t} + K A_K^{N-1} w_t,
\end{align*}
with $z^\star_{N | t} \in \mathcal Z_f$, and since
\begin{align*}
    K \mathcal Z_f \oplus K A_K^{N-1} \mathcal W &\subseteq (\mathcal{U} \ominus K \mathcal E_{N}) \oplus K A_K^{N-1} \mathcal W \\ &\subseteq \mathcal{U} \ominus K \mathcal E_{N-1},
\end{align*}
we have that $v_{N-1 | t+1} \in \mathcal{U} \ominus K \mathcal E_{N-1}$. This concludes the proof of recursive feasibility.

\subsection{Technical Background Results}

\begin{lemma}
\label{lemma:dual:norm}
Let $A \in \mathbb R^{p \times d}$ be a full column-rank matrix. Then $\|\cdot\|_2 \circ A$ is a norm on $\reals^d$ and $\|\cdot\|_2\circ A(A^\top A)^{-1}$ is its dual norm.
\end{lemma}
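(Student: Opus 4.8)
The plan is to verify the norm axioms directly and then compute the dual norm by a change of variables. Write $\|x\| := \|Ax\|_2$. Absolute homogeneity and the triangle inequality are inherited from $\|\cdot\|_2$ together with linearity of $A$, since $\|\lambda x\| = \|A(\lambda x)\|_2 = |\lambda|\,\|Ax\|_2$ and $\|x_1 + x_2\| = \|Ax_1 + Ax_2\|_2 \le \|Ax_1\|_2 + \|Ax_2\|_2$. For positive definiteness, $\|x\| = 0 \iff Ax = 0$, and because $A$ has full column rank it is injective, so $Ax = 0 \iff x = 0$. Hence $\|\cdot\|_2 \circ A$ is a norm on $\reals^d$.

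For the dual norm, recall $\|y\|_\ast = \sup\{\, y^\top x : \|Ax\|_2 \le 1 \,\}$. Since $A$ is full column rank, $A^\top A$ is invertible and $\pinv{A} = (A^\top A)^{-1} A^\top$ satisfies $\pinv{A} A = I_d$, so $x \mapsto Ax$ is a linear bijection from $\reals^d$ onto the subspace $\range(A)$, with inverse $z \mapsto \pinv{A} z$. Substituting $z = Ax$ and using $(\pinv{A})^\top = A(A^\top A)^{-1}$ (the inverse of the symmetric matrix $A^\top A$ being symmetric), I get
\[
\|y\|_\ast = \sup_{z \in \range(A),\; \|z\|_2 \le 1} y^\top \pinv{A} z = \sup_{z \in \range(A),\; \|z\|_2 \le 1} \big( A(A^\top A)^{-1} y \big)^\top z .
\]
The key observation is that the target vector $\bar y := A(A^\top A)^{-1} y$ already lies in $\range(A)$, so this restricted supremum behaves exactly like an unrestricted one.

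The only point requiring care is thus that, after the substitution, the variable $z$ ranges over $\range(A)$ rather than all of $\reals^p$; but since $\bar y \in \range(A)$, the Cauchy--Schwarz inequality gives $\bar y^\top z \le \|\bar y\|_2$ for every admissible $z$, with equality at $z = \bar y / \|\bar y\|_2 \in \range(A)$ (the case $y = 0$, equivalently $\bar y = 0$, being trivial). Hence the restricted supremum equals $\|\bar y\|_2 = \|A(A^\top A)^{-1} y\|_2$, which is precisely $\|\cdot\|_2 \circ A(A^\top A)^{-1}$ evaluated at $y$. When $p = d$ the subtlety disappears altogether, since then $A$ is invertible, $\range(A) = \reals^p$, and $A(A^\top A)^{-1} = (A^\top)^{-1}$. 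Everything else is routine.
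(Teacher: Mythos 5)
Your proof is correct, and it takes a genuinely different route from the paper's. Where you substitute $z = Ax$ to pull the problem onto $\range(A)$ and then finish with Cauchy--Schwarz (correctly noting the one subtle point, namely that the maximizing direction $A(A^\top A)^{-1}y$ itself lies in $\range(A)$, so restricting the supremum to that subspace costs nothing), the paper instead runs Lagrange duality on $\sup\{x^\top y : \|Ax\|_2 \le 1\}$, invokes Sion's minimax theorem to swap a supremum and infimum, and lands on the least-norm problem $\inf\{\|z\|_2 : A^\top z = y\}$, whose minimizer is the minimum-norm solution $A(A^\top A)^{-1}y$. Your argument is more elementary and self-contained: it needs no strong-duality or minimax machinery, only injectivity of $A$ and the self-duality of $\|\cdot\|_2$ on a subspace. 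The paper's derivation buys a different piece of intuition --- it exhibits the dual norm as the value of a minimum-norm least-squares problem, which makes the appearance of the pseudoinverse structurally transparent and matches the style of the distributionally robust reformulations used elsewhere in the paper. Both are complete; the norm-axiom verification is identical in substance (the paper merely declares it straightforward, while you spell it out).
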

\begin{proof}
It is straightforward to check that $\|\cdot\|_2 \circ A$ is a norm on $\reals^d$. In particular, the positive-definiteness property follows from the fact that $A$ is full column-rank. We will now prove that $\|\cdot\|_2\circ A(A^\top A)^{-1}$ is the dual norm to $\|\cdot\|_2 \circ A$, i.e.,
\begin{alignat*}{2}
    \|A(A^\top A)^{-1} y\|_2 = \sup \; &  x^\top y   
    \\ 
    \st \; &  \|A x\|_2 \leq 1.
\end{alignat*}
Using standard Lagrange duality arguments \cite[Proposition~5.3.1]{bertsekas2009convex}, the right-hand side is equal to 
\begin{align*}
    \inf_{\lambda \geq 0} \sup_{x \in \reals^d}  x^\top y +\lambda(1-\|A x\|_2),
\end{align*}
which can be further rewritten as
\begin{align*}
    \inf_{\lambda \geq 0} \sup_{x \in \reals^d} \inf_{\|z\|_2 \leq \lambda}  x^\top y +\lambda-(A x)^\top z.
\end{align*}
Now, using Sion's minimax theorem~\cite[Corollary~3.3]{sion1958general}, we can interchange the supremum and the inner infimum, leading to
\begin{align*}
    \inf_{\substack{\lambda \geq 0 \\ \|z\|_2 \leq \lambda}} \sup_{x \in \reals^d} \lambda + x^\top (y + A^\top z) = \inf_{\substack{\lambda \geq 0 \\ \|z\|_2 \leq \lambda \\ A^\top z = y}} \lambda = \inf_{A^\top z = y} \|z\|_2.
\end{align*}
Finally, since $A^\top$ is full-row rank, the last infimum is attained by the minimum-norm solution $A(A^\top A)^{-1} y$. 
\end{proof}

\end{document}